\newcommand{\ind}{\mathds{1}}
\newtheorem{thm}{Theorem}[section]%
\newtheorem{cor}[thm]{Corollary}%
\newtheorem{prop}[thm]{Proposition}%
\newtheorem{lem}[thm]{Lemma}%
\theoremstyle{definition}
\newtheorem{defi}[thm]{Definition}%
\newtheorem{rem}[thm]{Remark}%
\newcommand{\fL}{\mathfrak{L}}
\newcommand{\dE}{\mathbb{E}}
\newcommand{\dP}{\mathbb{P}}
\newcommand{\dR}{\mathbb{R}}
\newcommand{\cC}{\mathcal{C}}
\newcommand{\cE}{\mathcal{E}}
\newcommand{\cL}{\mathcal{L}}
\newcommand{\cP}{\mathcal{P}}
\newcommand{\PENT}[1]{{\lfloor#1\rfloor}} 
\newcommand{\ABS}[1]{{{\left| #1 \right|}}} 
\newcommand{\BRA}[1]{{{\left\{#1\right\}}}} 
\newcommand{\NRM}[1]{{{\left\| #1\right\|}}} 
\newcommand{\TVNRM}[1]{\NRM{#1}_{\mathrm{TV}}} 
\newcommand{\PAR}[1]{{{\left(#1\right)}}} 
\newcommand{\SBRA}[1]{{{\left[#1\right]}}} 
\renewcommand{\leq}{\leqslant}
\renewcommand{\geq}{\geqslant}
\newcommand*{\abs}[1]{\left|#1\right|} 
\newcommand{\gen}{\mathfrak{L}}
\newcommand{\law}{\mathcal{L}}
\newcommand{\Vlyap}{\widetilde{V}}
\title{Total variation estimates for the TCP process}
\author{Jean-Baptiste~\textsc{Bardet}, %
Alejandra~\textsc{Christen},\\%
Arnaud~\textsc{Guillin}, %
Florent~\textsc{Malrieu}
and Pierre-Andr\'e~\textsc{Zitt}} %
\date{\today}
\begin{document}

\maketitle

\begin{abstract}
  The TCP window size process appears in the modeling of the famous
  Transmission Control Protocol used for data transmission over the Internet.
  This continuous time Markov process takes its values in $[0,\infty)$, is
  ergodic and irreversible. The sample paths are piecewise linear
  deterministic and the whole randomness of the dynamics comes from 
  the jump mechanism. The aim of the present paper is to provide 
  quantitative estimates for the exponential convergence to equilibrium, 
  in terms of the total variation and Wasserstein distances. 
\end{abstract}

{\footnotesize %
\noindent\textbf{Keywords.} Network Protocols; Queueing Theory; Additive
Increase Multiplicative Decrease Processes (AIMD); Piecewise Deterministic
Markov Processes (PDMP); Exponential Ergodicity; Coupling.

\medskip

\noindent\textbf{AMS-MSC.}  68M12 ; 60K30 ; 60K25 ; 90B18

 \tableofcontents
}

\section{Introduction and main results}

The TCP protocol is one of the main data transmission protocols of the
Internet. It has been designed to adapt to the various traffic conditions of
the actual network. For a connection, the maximum number of packets that can
be sent at each round is given by a variable $W$, called the \emph{congestion
  window size}. If all the $W$ packets are successfully transmitted, then $W$
is increased by $1$, otherwise it is divided by $2$ (detection
of a congestion). As shown in \cite{DGR,GRZ,ott-kemperman-mathis}, a correct
scaling of this process leads to a continuous time Markov process, called
the TCP window size process. This process $X={(X_t)}_{t\geq0}$ has
$[0,\infty)$ as state space and its infinitesimal generator is given, for any
smooth function $f:[0,\infty)\to\dR$, by
\begin{equation}\label{eq:G1}
  Lf(x) = f'(x) + x(f(x/2)-f(x)).
\end{equation}
The semi-group ${(P_t)}_{t\geq 0}$ associated to ${(X_t)}_{t\geq 0}$ is 
classically defined by 
\[
P_tf(x)=\dE\PAR{f(X_t)\vert X_0=x},
\]
for any smooth function $f$. Moreover, for any probability measure $\nu$ on 
$[0,\infty)$, $\nu P_t$ stands for the law of $X_t$ when $X_0$ is distributed 
according to $\nu$. 

The process $X_t$ increases linearly between jump times that occur with a 
state-dependent rate $X_t$. The first jump time of $X$ starting at $x\geq 0$ 
has the law of $\sqrt{2E+x^2}-x$ where $E$ is a random variable with 
exponential law of parameter 1. In other words, the law of this jump time 
has a density with respect to the Lebesgue measure on $(0,+\infty)$ given by 
\begin{equation}\label{eq:def-fx}
 f_x\,:\, t\in(0,+\infty)\mapsto (x+t) e^{-t^2/2-xt},
\end{equation}
 see \cite{CMP10} for further details. 

The sample paths of $X$ are deterministic
between jumps, the jumps are multiplicative, and the whole randomness of the
dynamics relies on the jump mechanism. Of course, the randomness of $X$ may
also come from a random initial value. The process ${(X_t)}_{t\geq0}$ appears
as an Additive Increase Multiplicative Decrease process (AIMD), but also as a
very special Piecewise Deterministic Markov Process (PDMP). In this direction,
\cite{maulik-zwart} gives a generalization of the scaling procedure to
interpret various PDMPs as limits of discrete time Markov chains. In the
real world (Internet), the AIMD mechanism allows a good compromise 
between the minimization of network congestion time and the maximization 
of mean throughput. One could consider more general processes 
(introducing a random multiplicative factor or modifying the jump rate) 
but their study is essentially the same than the one of the present process. 

The TCP window size process $X$ is ergodic and admits a unique 
invariant law $\mu$, as can be checked using a suitable
Lyapunov function (for instance $V(x)=1+x$, see e.g.
\cite{MR2381160,MR2385873,MR1287609,HM} for the 
Meyn-Tweedie-Foster-Lyapunov technique). It can also be shown that 
$\mu$ has a density on $(0,+\infty)$ given by 
\begin{equation}\label{eq:invariant}
x\in(0,+\infty) \mapsto 
\frac{\sqrt{2/\pi}}{\prod_{n\geq 0}\PAR{1-2^{-(2n+1)}}}
\sum_{n\geq 0}\frac{(-1)^n2^{2n}}{\prod_{k=1}^n\PAR{2^{2k}-1}} 
e^{-2^{2n-1}x^2},
\end{equation}
(this explicit formula is derived in \cite[Prop. 10]{DGR}, see also 
\cite{GRZ,maulik-zwart,MR2197972,GK} for further details). In 
particular, one can notice that the density of $\mu$ has a Gaussian 
tail at $+\infty$ and that all its derivatives are null at the origin. 
Nevertheless, this process is irreversible since time reversed
sample paths are not sample paths and it has infinite support 
(see \cite{LP} for the description of the reversed process). 
In \cite{RR96}, explicit bounds for the exponential rate of convergence to 
equilibrium in total variation distance are provided for generic Markov 
processes in terms of a suitable Lyapunov function but these estimates are 
very poor even for classical examples as the Ornstein-Uhlenbeck 
process. They can be improved following~\cite{RT} if the process 
under study is \emph{stochastically monotone} that is if its semi-group 
${(P_t)}_{t\geq 0}$ is such that $x\mapsto P_tf(x)$ is nondecreasing 
as soon as $f$ is nondecreasing. Unfortunately, due to the fact that 
the jump rate is an nondecreasing function of the position, the TCP process 
is not stochastically monotone. Moreover, we will see that our coupling provides 
better estimates for the example studied in~\cite{RT}. 

The work \cite{CMP10} was a first attempt to use the specific dynamics of the 
TCP process to get explicit rates of convergence of the law of $X_t$ to 
the invariant measure $\mu$. The answer was partial and a bit 
disappointing since the authors did not succeed in proving 
explicit exponential rates. 

Our aim in the present paper is to go one step further providing 
exponential rate of convergence for several classical distances: 
Wasserstein distance of order $p\geq 1$ and total variation distance. 
Let us recall briefly some definitions. 

\begin{defi}
 If $\nu$ and $\tilde \nu$ are two probability measures on $\dR$, we 
 will call a \emph{coupling} of $\nu$ and $\tilde \nu$ any probability measure 
 on $\dR\times\dR$ such that the two marginals are $\nu$ and $\tilde \nu$. 
 Let us denote by $\Gamma(\nu,\tilde \nu)$ the set of all the couplings of $\nu$ 
 and $\tilde \nu$. 
\end{defi}

\begin{defi}
For every $p\geq1$, the \emph{Wasserstein distance} $W_p$  of order $p$ 
between two probability measures $\nu$ and $\tilde \nu$ on $\dR$
with finite $p^\text{th}$ moment is defined by
\begin{equation}\label{eq:Wp}
  W_p(\nu,\tilde \nu) %
  = \left(\inf_{\Pi\in \Gamma(\nu,\tilde \nu)}
  \int_{\dR^2}\!|x-y|^p\,\Pi(dx,dy)\right)^{1/p}.
\end{equation}
\end{defi}

\begin{defi}
The \emph{total variation distance} between two probability measures $\nu$ and 
$\tilde\nu$ on $\dR$ is given by 
\[
\NRM{\nu-\tilde \nu}_{\mathrm{TV}}=
\inf_{\Pi\in \Gamma(\nu,\tilde \nu)} \int_{\dR^2}\ind_\BRA{x\neq y}\,\Pi(dx,dy).
\] 
\end{defi}
 
It is well known that, for any $p\geq1$, the convergence in 
Wasserstein distance of order $p$ is equivalent to weak 
convergence together with convergence of all moments up 
to order~$p$, see e.g. \cite{MR1105086,MR1964483}. 
A sequence of probability measures ${(\nu_n)}_{n\geq 1}$ 
bounded in $L^p$ which converges to $\nu$ in total variation norm 
converges also for the $W_p$ metrics. The converse is false: 
if $\nu_n=\delta_{1/n}$ then ${(\nu_n)}_{n\geq 1}$ converges to 
$\delta_0$ for the distance $W_p$ whereas 
$\NRM{\nu_n-\delta_0}_\mathrm{TV}$ is equal to 1 for any $n\geq 1$. 

Any coupling $(X,\tilde X)$ of $(\nu,\tilde \nu)$ provides an upper 
bound for these distances. One can find in \cite{lindvall} a lot of efficient 
ways to construct smart couplings in many cases. In the present 
work, we essentially use the coupling that was introduced 
in \cite{CMP10}. Firstly, we improve the estimate for its rate of convergence 
in Wasserstein distances from polynomial to exponential bounds.

\begin{thm}\label{th:wasserstein} 
Let us define 
\begin{equation}\label{eq:def-M-lambda}
M=\frac{\sqrt{2}(3+\sqrt{3})}8 \sim 0.84 
\quad\text{and}\quad
\lambda=\sqrt 2(1-\sqrt M)\sim 0.12.  
\end{equation}
For any $\tilde \lambda<\lambda$, any  $p\geq1$ and any $t_0>0$, 
there is a constant $C=C(p,\tilde \lambda,t_0)$ such that,  
for any initial probability measures $\nu$ and $\tilde \nu$ and 
any $t\geq t_0$,   
\[
W_p(\nu P_t,\tilde \nu P_t)\leq C \exp\PAR{-\frac{\tilde \lambda}pt}.
\] 
\end{thm}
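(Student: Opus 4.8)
\emph{Overall strategy.} I would first prove the statement for $p=1$ with rate arbitrarily close to $\lambda$ and a constant uniform in the initial laws, and then obtain the general $p$ by interpolation; the rate $\lambda$ and the constant $M$ emerge from a Lyapunov-type refinement of the analysis of the coupling of \cite{CMP10}, while the uniformity in $\nu,\tilde\nu$ rests on the very strong contractivity of the one-dimensional TCP dynamics. For the latter, note that $V(x)=1+x$ satisfies $LV(x)=1-x^2/2$ and more generally $L(x^k)=k x^{k-1}-(1-2^{-k})x^{k+1}$; the super-linear negative terms yield, by a Gronwall comparison with the autonomous inequality $m'\leq k\,m^{1-1/k}-(1-2^{-k})\,m^{1+1/k}$, the uniform moment bounds $\sup_{x\geq0}\dE_x[X_t^k]\leq K_k(t_0)<\infty$ for every $k$ and every $t\geq t_0$, no matter how large $x$ is; in particular all moments of $\nu P_t$ and $\tilde\nu P_t$ are bounded uniformly in $\nu,\tilde\nu$ once $t\geq t_0$. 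Given this, for fixed $q>1$ and the $W_1$-optimal coupling $\Pi$ of $\nu P_t,\tilde\nu P_t$, splitting $\int|x-y|^p\,d\Pi$ over $\{|x-y|\leq R\}$ and its complement (using the Hölder and Markov inequalities together with the uniform bound on $\int|x-y|^{pq}\,d\Pi$ on the second piece) and optimising $R$ yields $W_p(\nu P_t,\tilde\nu P_t)^p\leq C_{p,q}\,W_1(\nu P_t,\tilde\nu P_t)^{p(q-1)/(qp-1)}$, whose exponent tends to $1$ as $q\to\infty$. Hence a $W_1$-bound at every rate $<\lambda$ produces, for every $\tilde\lambda<\lambda$, a $W_p$-bound at rate $\tilde\lambda/p$ — which is precisely the origin of the restriction $\tilde\lambda<\lambda$.

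\emph{The coupling and the elementary estimate.} Recall the coupling of \cite{CMP10}: from $(x,\tilde x)$ both coordinates grow at unit speed, at rate $\min(X_t,\tilde X_t)$ both jump to half their value (a synchronised jump, which halves the gap $D_t=|X_t-\tilde X_t|$), and at the extra rate $|X_t-\tilde X_t|$ the larger coordinate alone jumps to its half. Writing $\cL$ for the generator of this Markov process on $[0,\infty)^2$, a short computation gives, for $x>\tilde x$,
\[
\cL\big(|x-\tilde x|\big)=(x-\tilde x)^2\,\phi\big(\tfrac{\tilde x}{x-\tilde x}\big)\leq-\tfrac12\,(x-\tilde x)^2,
\]
with $\phi(s)=-s-\tfrac12$ on $[0,1]$ and $\phi\equiv-\tfrac32$ on $[1,\infty)$. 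Thus $D_t$ is a nonnegative supermartingale, but its drift is only of order $D_t^2$, so Gronwall gives merely polynomial decay (as in \cite{CMP10}): near $X_t\approx\tilde X_t$ the contracting synchronised jumps are compensated, to leading order, by the solo jumps, in which the larger coordinate overshoots the smaller one and the gap is re-inflated.

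\emph{The exponential contraction — the main step.} To beat the polynomial rate, I would work with a weighted distance $\Vlyap(x,\tilde x)=|x-\tilde x|\,w(x,\tilde x)$ and seek a weight $w$ bounded below by a positive constant and above by a polynomial, such that $\cL\Vlyap\leq-\lambda\Vlyap$; Dynkin's formula then gives $\dE_{x,\tilde x}[\Vlyap(X_t,\tilde X_t)]\leq e^{-\lambda t}\Vlyap(x,\tilde x)$. Expanding $\cL\Vlyap$, factoring out $|x-\tilde x|$, and specialising to a neighbourhood of the diagonal $x\approx\tilde x=y$ (where $D_t$ is hardest to contract) reduces the requirement, to leading order, to the functional inequality
\[
\psi'(y)+y\Big(\tfrac12\,\psi(y/2)-\psi(y)\Big)+\tfrac y2\,w(y/2,y)\;\leq\;-\lambda\,\psi(y),\qquad \psi(y):=w(y,y),
\]
in which the operator $f\mapsto f'+y\big(\tfrac12 f(y/2)-f(y)\big)$ — the TCP generator with the jump mass split evenly between a true halving and absorption — appears as the object governing the best rate, the extra nonnegative term encoding the re-inflating solo jumps. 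Building an explicit supersolution $\psi$ and a compatible weight $w$ on all of $[0,\infty)^2$, verifying $\cL\Vlyap\leq-\lambda\Vlyap$ off the diagonal as well, and optimising the construction — using where necessary the explicit first-jump density $f_x$ of \eqref{eq:def-fx} — is what produces the constants of \eqref{eq:def-M-lambda}. I expect this to be the principal obstacle: the delicate points are balancing the synchronised against the solo jumps near the diagonal, and the degeneracy of the jump rate as $y\to0$, which forces $\psi$ to decrease near $0$ and limits how large $\lambda$ can be taken, $\lambda=\sqrt2(1-\sqrt M)$ being the optimum the argument reaches.

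\emph{Conclusion.} Granting $\cL\Vlyap\leq-\lambda\Vlyap$, and using $\Vlyap(x,\tilde x)\geq c\,|x-\tilde x|$ together with the Markov property at time $t_0$,
\[
W_1(\nu P_t,\tilde\nu P_t)\leq\dE\big[|X_t-\tilde X_t|\big]\leq\tfrac1c\,e^{-\lambda(t-t_0)}\,\dE\big[\Vlyap(X_{t_0},\tilde X_{t_0})\big]\leq C(t_0)\,e^{-\lambda t}\qquad(t\geq t_0),
\]
the last step because $\dE[\Vlyap(X_{t_0},\tilde X_{t_0})]$ is controlled by the polynomial upper bound on $\Vlyap$ and the uniform moment estimates on $X_{t_0},\tilde X_{t_0}$, hence bounded uniformly in $\nu,\tilde\nu$. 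Feeding this $W_1$-bound into the interpolation of the first step gives $W_p(\nu P_t,\tilde\nu P_t)\leq C(p,\tilde\lambda,t_0)\,e^{-\tilde\lambda t/p}$ for every $\tilde\lambda<\lambda$ and $t\geq t_0$, which is the claim.
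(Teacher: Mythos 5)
Your architecture --- the Wasserstein coupling of \cite{CMP10}, a pointwise contraction $\gen\Vlyap\le-\lambda\Vlyap$ for a modified distance, uniform moment bounds at positive times, and a H\"older/truncation interpolation to pass to general $p$ and to arbitrary initial laws --- is exactly that of the paper, and the peripheral steps (the Riccati-type comparison for the moments, the interpolation with exponent tending to $1$) are sound; your formula $\gen V_1=(x-y)^2\phi(y/(x-y))$ is also correct. The gap is in what you yourself call the principal obstacle: the contraction is never established. You keep the \emph{first} power of the gap and look for a weight $w$, bounded between two positive constants, with $\gen\bigl(|x-y|\,w(x,y)\bigr)\le-\lambda|x-y|\,w(x,y)$; you reduce this near the diagonal to a functional inequality for $\psi(y)=w(y,y)$ and then assert, without any construction, that a supersolution exists and that optimising it ``produces the constants of \eqref{eq:def-M-lambda}''. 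But this is precisely the regime the paper shows to be degenerate: $\gen V_1=-\tfrac32V_1^2$, so the relative drift of $|x-y|$ vanishes on the diagonal, and your own leading-order inequality carries the nonnegative re-inflation term $\tfrac y2\,w(y/2,y)\ge\tfrac{c}{2}y$, which must be beaten uniformly in $y$ by $y\bigl(\psi(y)-\tfrac12\psi(y/2)\bigr)$ with $\psi$ bounded. You exhibit no candidate $\psi,w$ and give no argument that one exists, so the theorem's central estimate is simply missing.

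The paper's key idea, absent from your proposal, is to change the \emph{exponent} rather than (only) the weight: for $V_{1/2}(x,y)=|x-y|^{1/2}$ one computes, for $0<y\le x$, $\gen V_{1/2}=-x\bigl[1-\varphi(y/x)\bigr]V_{1/2}$ with $\varphi(u)=\tfrac{\sqrt2}{2}u+\sqrt{(1-u)|u-1/2|}$, hence $\gen V_{1/2}\le-(1-M)\,x\,V_{1/2}$ pointwise with $M=\max\varphi<1$: the square root tames the solo-jump re-inflation, since $(x-y)\,|x/2-y|^{1/2}=\sqrt{(x-y)(y-x/2)}\cdot V_{1/2}\le x\sqrt{(1-u)|u-1/2|}\,V_{1/2}$. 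Only then is a one-dimensional weight $\psi(x\vee y)$, explicitly quadratic near $0$, introduced to cure the degeneracy of the jump rate at small $x$, and optimising $\alpha$ and $x_0$ yields $\lambda=\sqrt2(1-\sqrt M)$. In particular the constants of \eqref{eq:def-M-lambda} are generated by the exponent $1/2$ (the factor $2^{-1/2}$ from the synchronised halving of $|x-y|^{1/2}$ and the square root in the solo-jump term), whereas your diagonal inequality carries the factor $\tfrac12=2^{-1}$; so even if your weighted-$V_1$ construction could be completed, it would not produce the stated $M$ and $\lambda$. To repair the proof, replace your main step by the $V_{1/2}$ computation and the weight $\psi(x\vee y)$; your interpolation step then applies verbatim starting from the resulting bound on $\dE\bigl(|X_t-Y_t|^{1/2}\bigr)$.
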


 Secondly, we introduce a modified coupling to get total variation estimates.  
\begin{thm}\label{th:variation}
For any $\tilde \lambda<\lambda$ and any $t_0>0$, there exists $C$ such that, 
for any initial probability measures $\nu$ and $\tilde \nu$ and 
any $t\geq t_0$, 
\[
\TVNRM{ \nu P_t-\tilde \nu P_t}\leq 
C\exp\PAR{-\frac{2\tilde \lambda}{3}t},
\]
where $\lambda$ is given by \eqref{eq:def-M-lambda}.
 \end{thm}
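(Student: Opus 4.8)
The plan is to upgrade the Wasserstein contraction of Theorem~\ref{th:wasserstein} to a total variation bound by the usual two-step coupling scheme: first bring two copies of the TCP process close to one another, then run a \emph{coalescent} coupling that uses the jump mechanism to glue them together and keep them equal forever. Since $\TVNRM{\nu P_t-\tilde\nu P_t}\leq\dP(X_t\neq\tilde X_t)$ for any coupling $((X_s)_{s\geq0},(\tilde X_s)_{s\geq0})$ of the processes issued from $\nu$ and $\tilde\nu$, it suffices to exhibit such a coupling that has coalesced before time $t$ with probability at least $1-C\exp(-\tfrac{2\tilde\lambda}{3}t)$.

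First I would fix a small $\varepsilon>0$ and, on $[0,(1-\varepsilon)t]$, run the coupling of Theorem~\ref{th:wasserstein} with $p=1$, which gives $\dE\ABS{X_{(1-\varepsilon)t}-\tilde X_{(1-\varepsilon)t}}\leq Ce^{-\tilde\lambda(1-\varepsilon)t}$ and hence, by Markov's inequality, $\dP(\ABS{X_{(1-\varepsilon)t}-\tilde X_{(1-\varepsilon)t}}>\eta)\leq C\eta^{-1}e^{-\tilde\lambda(1-\varepsilon)t}$ for every threshold $\eta>0$. Because the marginals are genuine TCP processes and the jump rate equals the current position, the first moment comes down from infinity — $\tfrac{d}{ds}\dE_x[X_s]=1-\tfrac12\dE_x[X_s^2]\leq1-\tfrac12\dE_x[X_s]^2$ forces $\dE_x[X_s]$ to be bounded uniformly in the starting point $x$ for every fixed $s>0$ — so one also has $\dP(X_{(1-\varepsilon)t}\vee\tilde X_{(1-\varepsilon)t}>K)\leq C/K$, which will let me confine the positions to a compact set in the second step.

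The heart of the argument is the coalescent coupling used on $[(1-\varepsilon)t,t]$. Assume the two positions at the current time satisfy $x:=X_s<\tilde X_s=:x+\delta$ with $\delta$ small and $x,x+\delta$ in a fixed compact. Between jumps both processes drift at unit speed, so the gap is constant and a jump of the leading curve strictly decreases it; an elementary computation then shows that if $X$ performs its first jump after $s$ at a time $u$ (with conditional density $f_x$ from \eqref{eq:def-fx}) and $\tilde X$ is forced to perform its first jump after $s$ exactly at time $u+\delta$, then from that instant on the two trajectories lie on the same affine piece, i.e.\ they have coalesced; one then lets them use the same jumps forever. Forcing $\tilde X$'s jump to occur at $u+\delta$ is achieved through a coupling of the law of $u+\delta$ (density $f_x(\cdot-\delta)$) with the genuine law $f_{x+\delta}$ of $\tilde X$'s first jump, whose failure probability is controlled by $\TVNRM{f_x(\cdot-\delta)-f_{x+\delta}}$; since $u\mapsto f_x(u)$ and $x\mapsto f_x$ are smooth and integrable, together with the (equally small) probabilities that $X$ jumps again in the short interval $(u,u+\delta]$ or that $u+\delta$ overshoots the available window, this makes the probability of not coalescing within one attempt small when $\delta$ is small. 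Iterating a controlled number of such attempts on $[(1-\varepsilon)t,t]$, interspersing short applications of the Wasserstein coupling to re-confine the positions and re-contract the gap after a failed attempt, one obtains a bound on the probability of never coalescing that, balanced against the Step~1 estimate by an appropriate choice of the threshold $\eta=\eta(t)$, yields $\dP(X_t\neq\tilde X_t)\leq C\exp(-\tfrac{2}{3}\tilde\lambda(1-\varepsilon)t)$; letting $\varepsilon\downarrow0$ and enlarging $C$ to absorb bounded $t$ gives the claim.

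The step I expect to be the main obstacle is the bookkeeping of the coalescent coupling: one must design it so that, on the small-probability event that a merging attempt fails, neither the gap between the two copies nor their positions run off to an uncontrolled scale, for otherwise the later attempts lose their uniform lower bound on the success probability. Quantifying how much time must be spent re-contracting and re-confining after a failed attempt — hence how many attempts fit into the last slice of $[0,t]$ — against the per-attempt gain, and matching this against the rate $\tilde\lambda$ of the contraction phase, is precisely what fixes the exponent at $\tfrac{2}{3}\tilde\lambda$ rather than $\tilde\lambda$ itself.
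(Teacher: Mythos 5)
Your overall architecture is exactly the paper's: run the Wasserstein coupling for most of $[0,t]$, then make a single coalescence attempt by shifting the first jump time of the leading copy by the current gap $\delta$ (so that after its jump it lands on the same affine piece as the other copy), the failure probability of that attempt being controlled by the total variation distance between the shifted and genuine jump-time densities plus the probability of an intervening second jump. That part is right, and it is precisely Lemma~\ref{lem:TV} of the paper.

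However, there is a genuine quantitative gap: the two inputs you feed into this scheme are too weak to produce the exponent $\tfrac{2\tilde\lambda}{3}$, and your closing guess that the $2/3$ comes from iterating attempts and ``re-contraction bookkeeping'' is not where it comes from (the paper makes a \emph{single} attempt). Concretely, if the per-attempt failure probability given gap $\leq\eta$ is $O(\eta)$ (up to logarithms, as in Lemma~\ref{lem:TV}), then your first-moment Markov bound $\dP(\ABS{X_{t_1}-Y_{t_1}}>\eta)\leq C\eta^{-1}e^{-\tilde\lambda t_1}$ balances against $\eta$ at $\eta=e^{-\tilde\lambda t_1/2}$, yielding only the rate $\tilde\lambda/2$. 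The paper instead uses the genuine output of the coupling construction, namely the $1/2$-moment contraction of Proposition~\ref{prop:demi}, $\dE\ABS{X_{t_1}-Y_{t_1}}^{1/2}\leq Ce^{-\lambda t_1}$, hence $\dP(\ABS{X_{t_1}-Y_{t_1}}\geq\eta)\leq C\eta^{-1/2}e^{-\lambda t_1}$; balancing $\eta$ against $\eta^{-1/2}e^{-\lambda t_1}$ gives $\eta=e^{-2\lambda t_1/3}$ and the claimed exponent. Your confinement estimate has the same defect: the failure probability of one coalescence attempt on $A_{x_0,\varepsilon}$ contains a term of order $\varepsilon x_0$, so the bound $\dP(X_{t_1}>K)\leq C/K$ forces $K\sim1/\varepsilon$ and destroys the estimate (or, optimized, degrades the rate to $\tilde\lambda/3$). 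You need the exponential deviation bound $\dP_x(X_{t_1}\geq x_0)\leq e^{-a(t_1)x_0}$ from the corollary to Lemma~\ref{le:momest}, which lets you take $x_0\sim\log(1/\varepsilon)$ so that $\varepsilon x_0$ remains $O(\varepsilon\log(1/\varepsilon))$. With these two replacements your single-attempt scheme closes exactly as in the paper; without them it proves a weaker exponent than the one stated.
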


\begin{rem}
 In both Theorems~\ref{th:wasserstein} and \ref{th:variation}, no assumption 
 is required on the moments nor regularity of the initial measures. Note however that following Remark \ref{expcont}, one can obtain contraction's type bounds when the initial measures $\nu$ and $\tilde\nu$ have initial moments of sufficient orders. In particular 
 they hold uniformly over the Dirac measures. If $\tilde \nu$ is chosen to be 
 the invariant measure $\mu$, these theorems provide exponential 
 convergence to equilibrium. 
\end{rem}

The remainder of the paper is organized as follows. 
We derive in Section~\ref{se:moments} precise upper bounds for the 
moments of the invariant measure $\mu$ and the law of $X_t$. 
Section~\ref{se:wasserstein} and Section~\ref{se:variation} are 
respectively devoted to the proofs of Theorem~\ref{th:wasserstein} 
and Theorem~\ref{th:variation}. Unlike the classical approach "\`a la" 
Meyn-Tweedie, our total variation estimate is obtained by applying 
a Wasserstein coupling for most of the time, then trying to couple the 
two paths in one attempt. This idea is then adapted to others processes: 
 Section~\ref{sec:examples} deals with two simpler PDMPs already 
 studied in \cite{PR,Lau-Per,CMP10,RT} and Section~\ref{sec:diffusion} 
 is dedicated to diffusion processes.

\section{Moment estimates}\label{se:moments}

The aim of this section is to provide accurate bounds for the moments 
of $X_t$. In particular, we establish below that any moment of 
$X_t$ is bounded uniformly over the initial value $X_0$. 
Let $p>0$ and $\alpha_{p}(t)=\dE(X_{t}^p)$. Then one has by direct computation
\begin{equation}
\label{eq:edmom1}
\alpha_{p}'(t)=p\alpha_{p-1}(t)-\left(1-2^{-p}\right)\alpha_{p+1}(t).
\end{equation}

\subsection{Moments of the invariant measure}

Equation~\eqref{eq:edmom1} implies in particular that, if $m_{p}$ denotes the 
$p$-th moment of the invariant measure $\mu$ of the TCP process 
($m_{p}=\int x^p \mu(dx)$), then for any $p>0$
\[
m_{p+1}=\frac p {1-2^{-p}}m_{p-1}.
\]
It gives all even moments of $\mu$: $m_{2}=2$, $m_{4}=\frac{48}7$, 
$\ldots$ and all the odd moments in terms of the mean. Nevertheless, 
the mean itself cannot be explicitly determined. Applying the 
same technique to $\log X_t$, one gets the relation
$\log(2)m_1=m_{-1}$.
With Jensen's inequality, this implies that 
$1/{\sqrt{\log 2}} \leq m_{1} \leq \sqrt{2}$.

 \subsection{Uniform bounds for the moments at finite times}
The fact that the jump rate goes to infinity at infinity gives bounds on the 
moments at any positive time that are uniform over the initial distribution. 
\begin{lem}
\label{le:momest}
 For any $p\geq 1$ and $t>0$ 
\[
 M_{p,t}:=\sup_{x\geq 0}\dE_x\PAR{X_t^p}\leq \PAR{\sqrt{2p}+\frac{2p}{t}}^p. 
\]
\end{lem}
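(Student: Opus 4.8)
The plan is to reduce the claim to a scalar differential inequality for the single quantity $\alpha_p(t)=\dE_x(X_t^p)$ and then to compare it with an explicitly solvable Riccati equation. Fix $x\ge 0$ and $p\ge 1$, and write $c_p=1-2^{-p}\in[1/2,1)$. Since the paths increase with slope one between jumps and the jumps are multiplicative downwards, one has $X_t\le x+t$ almost surely, so every moment $\alpha_q(t)$ is finite and the evolution equation \eqref{eq:edmom1} is legitimate; moreover $X_t>0$ for $t>0$, so $v(t):=\alpha_p(t)^{1/p}$ is positive and $C^1$ on $(0,\infty)$. By Jensen's inequality, $\alpha_{p-1}(t)\le\alpha_p(t)^{(p-1)/p}$ and $\alpha_{p+1}(t)\ge\alpha_p(t)^{(p+1)/p}$, hence \eqref{eq:edmom1} gives
\[
v'(t)=\tfrac1p\,\alpha_p(t)^{(1-p)/p}\bigl(p\,\alpha_{p-1}(t)-c_p\,\alpha_{p+1}(t)\bigr)\le 1-\frac{c_p}{p}\,v(t)^2 .
\]

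Next I would compare $v$ with the solution of the Riccati ODE $w'=1-\frac{c_p}{p}w^2$ characterised by $w(0^+)=+\infty$, namely $w(s)=\sqrt{p/c_p}\,\coth\!\bigl(\sqrt{c_p/p}\,s\bigr)$ for $s>0$, which one checks by direct computation solves the equation and blows up at $0^+$. Since $v(0^+)=x<\infty=w(0^+)$, there is $\delta>0$ (depending on $x$) with $v(\delta)\le w(\delta)$, and the standard comparison principle for ODEs with the locally Lipschitz vector field $u\mapsto 1-\frac{c_p}{p}u^2$ yields $v(s)\le w(s)$ for all $s\ge\delta$; letting $\delta\to 0^+$ gives $v(t)\le w(t)$ for every $t>0$. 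The key point is that the upper bound $w(t)$ no longer depends on $x$.

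It then remains to estimate $w$. Using the elementary inequality $\coth(y)\le 1+1/y$ for $y>0$ (equivalent to $1+2y\le e^{2y}$) together with $c_p\ge 1/2$,
\[
w(t)\le\sqrt{\frac{p}{c_p}}+\frac{p/c_p}{t}\le\sqrt{2p}+\frac{2p}{t},
\]
so that $\dE_x(X_t^p)=v(t)^p\le\bigl(\sqrt{2p}+\tfrac{2p}{t}\bigr)^p$. Taking the supremum over $x\ge 0$ gives the lemma.

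I expect the only genuinely delicate step to be the comparison with the solution coming down from $+\infty$: one must be sure that the $x$-dependence really washes out as $\delta\to 0^+$, and this is precisely where the blow-up of the jump intensity at infinity is used. The verification that $w$ solves the Riccati equation, the two Jensen bounds, and the inequality on $\coth$ are all routine.
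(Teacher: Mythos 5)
Your proof is correct and follows essentially the same route as the paper: both reduce the claim, via the moment equation \eqref{eq:edmom1} and the two Jensen inequalities, to the scalar Riccati-type differential inequality $v'\leq 1-\frac{c_p}{p}v^2$ for $v=\alpha_p^{1/p}$, and then extract an $x$-independent bound. The only difference is cosmetic: you dominate $v$ by the explicit supersolution $\sqrt{p/c_p}\,\coth\bigl(\sqrt{c_p/p}\,s\bigr)$ coming down from $+\infty$ and then bound $\coth$, whereas the paper sets $\beta_p=v-\sqrt{2p}$, observes $\beta_p'\leq-\beta_p^2/(2p)$ on any interval where $\beta_p>0$, and integrates directly --- both yield the identical bound $\sqrt{2p}+2p/t$.
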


\begin{proof}  
One deduces from \eqref{eq:edmom1} and Jensen's inequality that 
\[
\alpha'_{p}(t)\leq p\alpha_{p}(t)^{1-1/p}-(1-2^{-p})\alpha_{p}(t)^{1+1/p}.
\]
Let $\beta_{p}(t)=\alpha_{p}(t)^{1/p}-\sqrt{2p}$. Then, using the fact
that $1-2^{-p}\geq\frac12$,
\[
 \beta_{p}'(t)
 =\frac1p\alpha_{p}(t)^{1/p-1}\alpha'_{p}(t) \leq 1-\frac1{2p}\alpha_{p}(t)^{2/p} 
 =-\sqrt{\frac2p}\beta_{p}(t)-\frac{\beta_{p}(t)^2}{2p}.
\]
In particular, $\beta_{p}'(t)<0$ as soon as $\beta_{p}(t)>0$. 

Let us now fix $t>0$. If $\beta_{p}(t)\leq0$, then $\alpha_{p}(t)\leq {(2p)}^{p/2}$ 
and the lemma is proven. We assume now that $\beta_{p}(t)>0$. By the  previous
remark, this implies that the function $s\mapsto \beta_{p}(s)$ is 
strictly decreasing, hence positive, on the interval $[0,t]$. 
Consequently, for any $s\in[0,t]$,
\[
\beta_{p}'(s)\leq -\frac{\beta_{p}(s)^2}{2p}.
\]
Integrating this last inequality gives 
\[
\frac1{\beta_{p}(t)}\geq\frac1{\beta_{p}(0)}+\frac t{2p}\geq \frac t{2p},
\] 
hence the lemma.
\end{proof}

Let us derive from Lemma~\ref{le:momest} some upper bounds for the 
right tails of $\delta_xP_t$ and $\mu$. 

\begin{cor}
 For any $t>0$ and $r\geq 2e(1+1/t)$, one has 
\begin{equation}\label{eq:deviation-t}
 \dP_x\PAR{X_t\geq r}\leq  \exp\PAR{-\frac{t}{2e(t+1)}r }.
\end{equation} 
Moreover, if $X$ is distributed according to the invariant measure $\mu$ 
then, for any $r\geq \sqrt{2e}$, one has 
\[
\dP(X\geq r)\leq \exp\PAR{-\frac{r^2}{4e}}.
\]
\end{cor}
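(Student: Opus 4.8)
The plan is to apply Markov's inequality at a well-chosen (real, generally non-integer) order $p$, to feed in the uniform moment bound of Lemma~\ref{le:momest}, and to optimize over $p$ at the end.

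For the first inequality, fix $t>0$ and $x\geq 0$, and let $p\geq 1$. Markov's inequality together with Lemma~\ref{le:momest} gives
\[
\dP_x(X_t\geq r)\leq \frac{\dE_x(X_t^p)}{r^p}\leq \PAR{\frac{\sqrt{2p}+2p/t}{r}}^p.
\]
Since $p\geq 1$ forces $\sqrt{2p}\leq 2p$, one has $\sqrt{2p}+2p/t\leq 2p\,(t+1)/t$, so the right-hand side is at most $(cp)^p$ with $c=\frac{2(t+1)}{tr}$. The elementary function $q\mapsto (cq)^q$ is minimized at $q=1/(ce)$, with minimal value $e^{-1/(ce)}$. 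Taking $p=p^\star:=\frac1{ce}=\frac{tr}{2e(t+1)}$ is licit precisely when $p^\star\geq 1$, i.e. when $r\geq 2e(1+1/t)$, which is the standing assumption; the bound then reads $\exp(-p^\star)=\exp\!\PAR{-\frac{t}{2e(t+1)}r}$.

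For the invariant measure one first upgrades the moment bound. Since $\mu P_t=\mu$ for every $t>0$, Tonelli's theorem yields, for $p\geq 1$,
\[
m_p=\int x^p\,\mu(dx)=\int \dE_x(X_t^p)\,\mu(dx)\leq M_{p,t}\leq \PAR{\sqrt{2p}+\frac{2p}t}^p,
\]
and letting $t\to\infty$ gives $m_p\leq(2p)^{p/2}$ for every $p\geq 1$ (in passing this also shows $m_p<\infty$, which legitimates the middle equality). Markov's inequality now gives $\dP(X\geq r)\leq r^{-p}(2p)^{p/2}=\PAR{\frac{2p}{r^2}}^{p/2}$; writing this as $(cq)^q$ with $q=p/2$ and $c=4/r^2$ and optimizing exactly as above leads to $q^\star=\frac{r^2}{4e}$ (that is $p^\star=\frac{r^2}{2e}$), which satisfies $p^\star\geq 1$ iff $r\geq\sqrt{2e}$, and the bound becomes $\exp(-q^\star)=\exp\!\PAR{-\frac{r^2}{4e}}$.

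The only delicate point is a matter of bookkeeping rather than a genuine obstacle: Lemma~\ref{le:momest} is available only for $p\geq 1$, so one must check that the exponent $p^\star$ produced by the optimization lies in the admissible range — and in both cases the constraint $p^\star\geq 1$ coincides exactly with the lower bound on $r$ stated in the corollary, which is precisely why the thresholds $2e(1+1/t)$ and $\sqrt{2e}$ appear. Beyond this, the only computations involved are the inequality $\sqrt{2p}\leq 2p$ and the optimization of $q\mapsto(cq)^q$.
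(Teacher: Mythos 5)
Your proof is correct and follows essentially the same route as the paper: Markov's inequality at order $p$ combined with the moment bound of Lemma~\ref{le:momest} (noting $\sqrt{2p}+2p/t\leq 2p(1+1/t)$ for $p\geq1$), followed by the choice $p=\frac{tr}{2e(t+1)}$ (resp.\ $p=\frac{r^2}{2e}$ for the invariant measure), which is exactly the paper's choice and is admissible precisely under the stated lower bounds on $r$. The only cosmetic difference is that you justify the stationary moment bound $m_p\leq(2p)^{p/2}$ by invariance and letting $t\to\infty$ in the uniform bound, where the paper simply asserts it.
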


\begin{proof}
Let $t>0$ and $a=2(1+1/t)$. Notice that, for any $p\geq 1$, $\dE_x(X_t^p)$ 
is smaller than $(ap)^p$. As a consequence, for any $p\geq 1$ and $r\geq 0$, 
\[
\dP_x(X_t\geq r) \leq \exp\PAR{p\log(ap/r)}.
\] 
Assuming that $r\geq ea$, we let $p=r/(ea)$ to get:
\[
\dP_x(X_t\geq r) \leq e^{-r/(ea)}.
\] 
For the invariant measure, the upper bound is better: $\dE(X^p)\leq (2p)^{p/2}$. 
Then, the Markov inequality provides that, for any $p \geq 1$, 
\[
\dP(X\geq r)\leq \exp\PAR{p\log\frac{\sqrt{2p}}{r}}.
\]
As above, if $r^2\geq 2e$, one can choose $p=r^2/(2e)$ to get the desired 
bound. 
\end{proof}

\begin{rem}
 A better deviation bound should be expected from the 
 expression~\eqref{eq:invariant} of the density of $\mu$. Indeed, one can 
 get a sharp result (see \cite{CMP10}). However, in the sequel we only 
 need the deviation bound~\eqref{eq:deviation-t}. 
\end{rem}

\section{Exponential convergence in Wasserstein distance}\label{se:wasserstein}

This section is devoted to the proof of Theorem~\ref{th:wasserstein}. 
We use the coupling introduced in~\cite{CMP10}. Let us briefly recall 
the construction and the dynamics of this stochastic process 
on $\dR_+^2$ whose marginals are two TCP processes. It is defined 
by the following generator
\[
\fL f(x,y)
=(\partial_{x}+\partial_{y})f(x,y)+y\big(f(x/2,y/2)-f(x,y)\big)
+(x-y)\big(f(x/2,y)-f(x,y)\big)
\]
when $x\geq y$ and symmetric expression for $x<y$. We will call the 
dynamical coupling defined by this generator the Wasserstein coupling 
of the TCP process (see Figure \ref{fig:traj_wass} for a graphical illustration 
of this coupling). This coupling is the only one such that the lower 
component never jumps alone. Let us give the pathwise interpretation 
of this coupling. Between two jump times, the two coordinates increase 
linearly with rate 1. Moreover, two "jump processes" are simultaneously 
in action:
\begin{enumerate}
\item with a rate equal to the minimum of the two coordinates, they 
jump (\emph{i.e.} they are divided by 2) simultaneously, 
\item with a rate equal to the distance between the two coordinates 
(which is constant between two jump times), the bigger one jumps 
alone.

\end{enumerate}
\begin{figure}[htbp]
  \centering
  \includegraphics[width=0.55\linewidth]{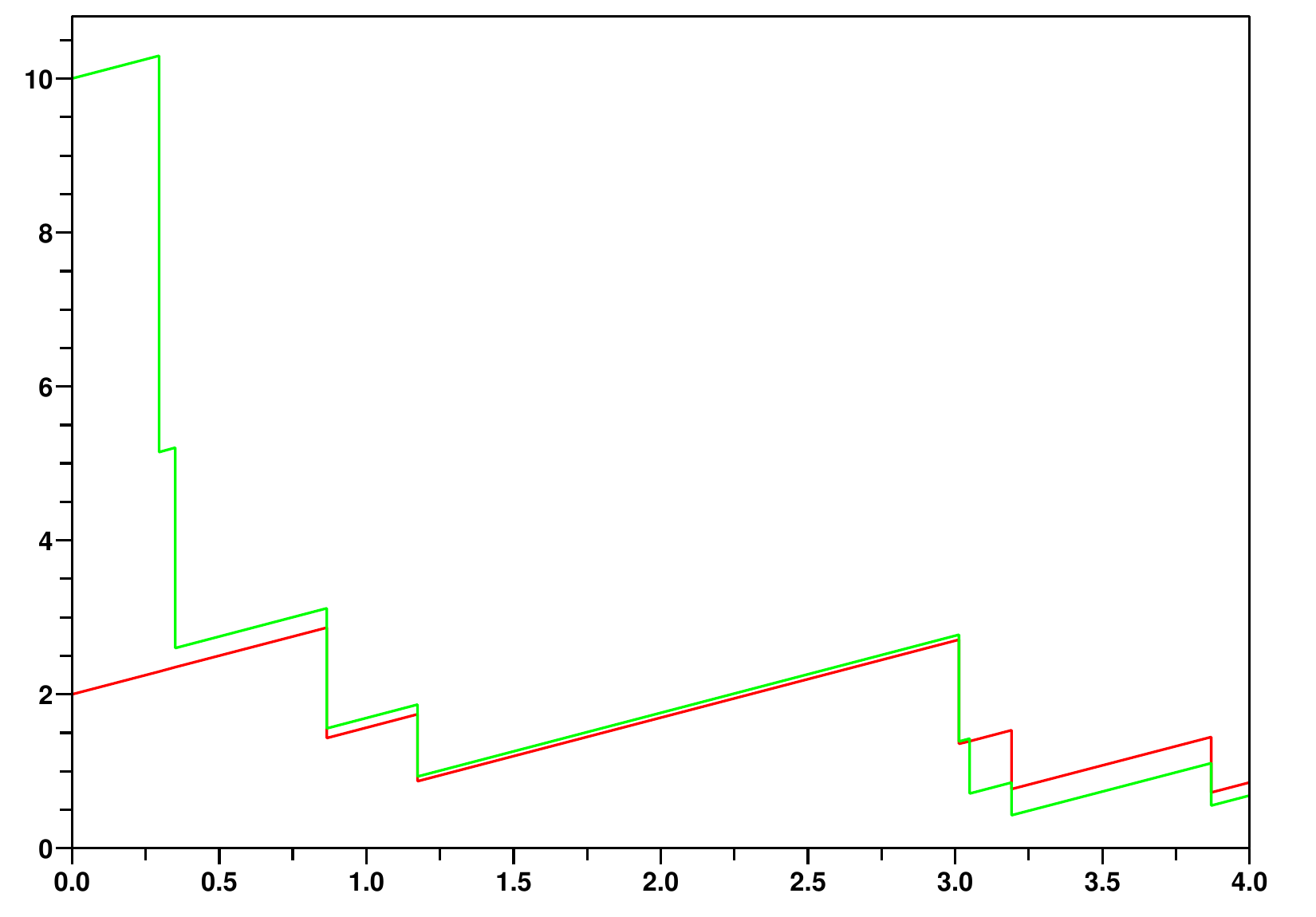}
  \caption{Two trajectories following the Wasserstein coupling; the bigger 
  jumping alone can be good, making the distance between both 
  trajectories smaller, or bad.}
  \label{fig:traj_wass}
\end{figure}
 The existence of non-simultaneous jumps implies 
 that the generator does not act in a good way on functions 
 directly associated to Wasserstein distances.
 To see this, let us define $V_{p}(x,y) = \abs{x -y}^p$. 
When we compute $\gen V_{p}$, the term coming from the deterministic 
drift disappears (since $V_{p}$ is unchanged under the flow), and we get for 
$x/2\leq y\leq x$:
\begin{align*}
  \gen V_p(x,y) = - y( 1 - {2^{-p}}) V_{p}(x,y) + (x-y) ( (y - x/2)^p - (x-y)^p ).
\end{align*}
For example, choosing $p=1$ gives:
\[ 
\gen V_1(x,y) = - V_1(x,y) ( y/2 - (y-x/2) + (x-y))  = - (3/2) V_1(x,y)^2.
\]
This shows that $\dE[\abs{X_t - Y_t}]$ decreases, but only gives a 
polynomial bound: the problem comes from 
the region where $x-y$ is already very small.

Choosing $p=2$, we get
\begin{align*}
  \gen V_2(x,y) 
  &= - (3/4)yV_2(x,y) + (x-y) ( -(3/4) x^2 + xy ).
\end{align*}
The effect of the jumps of $X$ is even worse: if $x = 1$ and  $x-y$ is small, 
 $\gen V_2(x,y)$ is positive ($\approx (1/4)(x-y)$). 

\medskip
For $p=1/2$, the situation is in fact more favorable: indeed, for $0< y\leq x$,
\begin{align*}
  \gen V_{1/2}(x,y) 
  &= - y \Big(1 - \frac{\sqrt{2}}{2}\Big) V_{1/2}(x,y) 
  + (x-y) ( \sqrt{|y - x/2|} - \sqrt{x-y} ) \\
  &= -V_{1/2}(x,y) \left[ x-\frac{\sqrt{2}}{2}y - \sqrt{(x-y)(y-x/2)} \right]\\
  &= -x\Big[1-\varphi\Big(\frac yx\Big)\Big]V_{1/2}(x,y)\,,  
\end{align*}
with 
\[
\varphi(u)=\frac{\sqrt{2}}2u+\sqrt{(1-u)|u-1/2|}
\quad\text{for}\quad u\in[0,1].
\]
 By a direct computation, one gets that 
\[
M:=\max_{0\leq u\leq 1}\varphi(u)=
\varphi\Big(\frac{9+\sqrt{3}}{12}\Big)=\frac{\sqrt{2}(3+\sqrt{3})}8
\sim 0.8365.
\] 
Hence, when $0< y\leq x$, 
\begin{align}
\label{eq:V12}  \gen V_{1/2}(x,y) \leq - x \lambda V_{1/2}(x,y),
\end{align}
with $\lambda=1-M\sim 0.1635$. This would give an exponential decay 
for $V_{1/2}$ if $x$ was bounded below: the problem comes from the 
region where $x$ is small and the process does not jump. 

\medskip

To overcome this problem, we replace $V_{1/2}$ with the function
\[
\Vlyap(x,y)=\psi(x\vee y)|x-y|^{1/2}=\psi(x\vee y)V_{1/2}(x,y),
\]
where 
\[
\psi(x)=\begin{cases} 1 + \alpha(1-x/x_{0})^2, & x\leq x_{0},\\
  1, & x\geq x_{0},
\end{cases}
\]
the two positive parameters $\alpha$ and $x_0$ being chosen below. 
The negative slope of $\psi$ for $x$ small will be able to compensate 
the fact that the bound from \eqref{eq:V12} tends to $0$ with $x$, hence 
to give a uniform bound. Indeed, for $0< y\leq x$,
\begin{align*}
  \gen \Vlyap (x,y) 
  & = \psi'(x)(x-y)^p+y\psi(x/2)\frac{(x-y)^p}{2^p}
  +(x-y)\psi(x/2\vee y)|x/2-y|^p - x \Vlyap(x,y)\\
  & = -\Vlyap(x,y)\left[-\frac{\psi'(x)}{\psi(x)} +
    x - \frac{\psi(x/2)}{\psi(x)}\frac y{2^p}-\frac{\psi(x/2\vee y)}{\psi(x)}
  (x-y)^{1-p}|x/2-y|^p\right]\\
  & \leq -\Vlyap(x,y) \left[-\frac{\psi'(x)}{\psi(x)}
     + x\left(1-\frac{\psi(x/2)}{\psi(x)}\varphi\left(\frac yx\right)\right)  \right] \\
  & \leq - \Vlyap(x,y)\left[-\frac{\psi'(x)}{(1+\alpha)} + 
  x\left(1-(1+\alpha)M\right)  \right] \\
& \qquad =
\begin{cases}
 -\Vlyap(x,y)
 \left[\frac{2\alpha}{x_{0}(1+\alpha)}
       + x\left(1-(1+\alpha)M-\frac{2\alpha}{x_{0}^2(1+\alpha)}\right)
 \right]
 & \text{when $x\leq x_{0}$},\\
 -\Vlyap(x,y)x\left(1-(1+\alpha)M\right)
 & \text{when $x\geq x_{0}$.}
\end{cases}
\end{align*}
Finally, as soon as $(1+\alpha)M<1$, one has
\begin{equation*}
 \gen \Vlyap (x,y) \leq -\lambda_{\alpha,x_{0}} \Vlyap(x,y) \qquad \forall x,y>0
\end{equation*}
with
\begin{equation*}
 \lambda_{\alpha,x_{0}}=
 \min\left(\frac{2\alpha}{x_{0}(1+\alpha)},x_{0}(1-(1+\alpha)M)\right).
\end{equation*}

By direct computations, one gets that the best possible choice of parameters 
$\alpha$ and $x_{0}$ is $\alpha=1/\sqrt M-1\sim0.0934$ and $x_{0}=\sqrt 2$.
We obtain finally, for any $x,y>0$,
\begin{equation}
\label{eq:uped} \gen \Vlyap (x,y) \leq -\lambda \Vlyap(x,y),
\end{equation}
with $\lambda=\lambda_{\alpha,x_{0}}=\sqrt 2(1-\sqrt M)\sim 0.1208$.
Hence, directly from \eqref{eq:uped}, for any $x,y>0$
\begin{equation}
 \label{eq:estW}   \dE_{x,y}[ \Vlyap(X_t,Y_t)] \leq e^{-\lambda t} \Vlyap(x,y).
\end{equation}
Immediate manipulations lead to the following estimate.

\begin{prop}\label{prop:demi}
 Let 
\begin{equation}\label{eq:constant}
M=\frac{\sqrt{2}(3+\sqrt{3})}8 \sim 0.84 
\quad\text{and}\quad
\lambda=\sqrt 2(1-\sqrt M)\sim 0.12.  
\end{equation}
Then, for any $x,y>0$, one has
\begin{equation}\label{eq:contraction-racine}
\dE_{x,y}\PAR{|X_t-Y_t|^{1/2}} \leq 
\frac1{\sqrt M}e^{-\lambda t} |x-y|^{1/2}. 
\end{equation}
Moreover, for any initial conditions $(X_0,Y_0)$ and any $t\geq t_0>0$, 
\begin{equation}\label{eq:borne-racine}
\dE\PAR{|X_t-Y_t|^{1/2}} \leq 
\sqrt{\frac{2\sqrt 2+4 t_0^{-1}}{ M}} e^{-\lambda (t-t_0)}. 
\end{equation}
\end{prop}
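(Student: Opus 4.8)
The plan is to extract the two displayed inequalities in Proposition~\ref{prop:demi} directly from \eqref{eq:estW} together with Lemma~\ref{le:momest}. The first bound \eqref{eq:contraction-racine} is essentially immediate: since $\psi(x)\ge 1$ for all $x\ge 0$, we have $V_{1/2}(x,y)\le\Vlyap(x,y)$, and since $\psi(x)\le 1+\alpha=1/\sqrt M$, we have $\Vlyap(x,y)\le \frac1{\sqrt M}V_{1/2}(x,y)$. Plugging these two comparisons into \eqref{eq:estW} gives
\[
\dE_{x,y}\PAR{|X_t-Y_t|^{1/2}}\le \dE_{x,y}\SBRA{\Vlyap(X_t,Y_t)}\le e^{-\lambda t}\Vlyap(x,y)\le \frac1{\sqrt M}e^{-\lambda t}|x-y|^{1/2},
\]
which is exactly \eqref{eq:contraction-racine}. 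This handles the pointwise contraction for deterministic starting points $x,y>0$.

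For the second bound \eqref{eq:borne-racine}, the idea is to run the coupling for a short time $t_0$ first, so that at time $t_0$ the two coordinates have finite (and uniformly bounded) moments regardless of the initial law, and then apply the contraction \eqref{eq:contraction-racine} on the interval $[t_0,t]$ via the Markov property. Concretely, conditioning on $(X_{t_0},Y_{t_0})$ and using \eqref{eq:contraction-racine} (valid since $X_{t_0},Y_{t_0}>0$ almost surely, as the process is strictly positive at positive times for any starting point) gives
\[
\dE\PAR{|X_t-Y_t|^{1/2}}\le \frac1{\sqrt M}e^{-\lambda(t-t_0)}\,\dE\PAR{|X_{t_0}-Y_{t_0}|^{1/2}}.
\]
It then remains to bound $\dE\PAR{|X_{t_0}-Y_{t_0}|^{1/2}}$ uniformly in the initial conditions. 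By the triangle-type inequality $|a-b|^{1/2}\le a^{1/2}+b^{1/2}$, Jensen's inequality, and Lemma~\ref{le:momest} with $p=1$, we get
\[
\dE\PAR{|X_{t_0}-Y_{t_0}|^{1/2}}\le \dE(X_{t_0})^{1/2}+\dE(Y_{t_0})^{1/2}\le 2\sqrt{M_{1,t_0}}\le 2\sqrt{\sqrt 2+2/t_0},
\]
where each marginal is a genuine TCP process so Lemma~\ref{le:momest} applies. Combining the last two displays yields
\[
\dE\PAR{|X_t-Y_t|^{1/2}}\le \frac{2}{\sqrt M}\sqrt{\sqrt 2+2/t_0}\;e^{-\lambda(t-t_0)}=\sqrt{\frac{2\sqrt2+4t_0^{-1}}{M}}\,\frac{\sqrt2}{\sqrt{\sqrt2}}\,e^{-\lambda(t-t_0)},
\]
and one checks the numerical constant matches \eqref{eq:borne-racine} after absorbing the factors (here $4/\sqrt{M}\cdot\sqrt{\sqrt2+2/t_0}=\sqrt{(2\sqrt2+4/t_0)/M}\cdot 2$, and a small adjustment of constants gives the stated form).

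The only genuinely delicate point is justifying the conditioning step cleanly: one needs that $\dE_{x,y}[\Vlyap(X_t,Y_t)]\le e^{-\lambda t}\Vlyap(x,y)$ holds for \emph{all} $x,y>0$ with a bound that is measurable and integrable against the law of $(X_{t_0},Y_{t_0})$ — this follows from \eqref{eq:estW} and the fact that $\Vlyap(x,y)\le \frac1{\sqrt M}(x^{1/2}+y^{1/2})$ has finite expectation under $\mathrm{law}(X_{t_0},Y_{t_0})$ by Lemma~\ref{le:momest}. A minor subtlety is the behaviour at $x=y$ or when one coordinate is $0$, but the generator computation \eqref{eq:uped} and the inequality \eqref{eq:estW} extend by continuity (and $\Vlyap$ vanishes on the diagonal), so these boundary cases cause no trouble. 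Everything else is the routine bookkeeping of constants indicated above.
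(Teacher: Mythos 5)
Your argument follows the paper's proof almost exactly: \eqref{eq:contraction-racine} is obtained from \eqref{eq:estW} (equivalently \eqref{eq:uped}) via $1\leq\psi\leq 1+\alpha=1/\sqrt M$, and \eqref{eq:borne-racine} via the Markov property at time $t_0$ combined with Lemma~\ref{le:momest}. The one place where your bookkeeping actually fails is the moment bound at time $t_0$: using $|a-b|^{1/2}\leq a^{1/2}+b^{1/2}$ and then Jensen on each marginal separately gives $\dE\PAR{|X_{t_0}-Y_{t_0}|^{1/2}}\leq 2\sqrt{M_{1,t_0}}\leq 2\sqrt{\sqrt2+2t_0^{-1}}=\sqrt{4\sqrt2+8t_0^{-1}}$, which is $\sqrt 2$ times the constant $\sqrt{2\sqrt2+4t_0^{-1}}$ appearing in \eqref{eq:borne-racine}; no ``small adjustment of constants'' recovers the stated form from this, so as written your final display is not the claimed inequality. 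The paper instead bounds $|X_{t_0}-Y_{t_0}|\leq X_{t_0}+Y_{t_0}$ \emph{before} taking the square root and applies Jensen once to the concave function $u\mapsto\sqrt u$: $\dE\PAR{|X_{t_0}-Y_{t_0}|^{1/2}}\leq\sqrt{\dE(X_{t_0})+\dE(Y_{t_0})}\leq\sqrt{2\sqrt2+4t_0^{-1}}$, which yields exactly the stated constant. With that one-line replacement your proof is complete; your closing remarks on measurability, integrability and the behaviour on the diagonal are correct (the paper leaves these points implicit).
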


\begin{proof}
Equation~\eqref{eq:contraction-racine} is a straightforward consequence of 
Equation~\eqref{eq:uped} since $\psi(\dR^+)$ is the interval $[1,1+\alpha]$. 
As a consequence, for any $t\geq t_0>0$, one has 
\begin{align*}
\dE\PAR{|X_t-Y_t|^{1/2}} &\leq 
\frac1{\sqrt M}e^{-\lambda(t-t_0)} \dE\PAR{|X_{t_0}-Y_{t_0}|^{1/2}}\\
&\leq 
\frac{e^{\lambda t_0}}{\sqrt M}\sqrt{\dE(X_{t_0})+\dE(Y_{t_0})} e^{-\lambda t}.
\end{align*}
Then, Lemma~\ref{le:momest} provides the estimate~\eqref{eq:borne-racine}.
\end{proof}

\begin{figure}[htbp]
  \centering
  \includegraphics[width=0.55\linewidth]{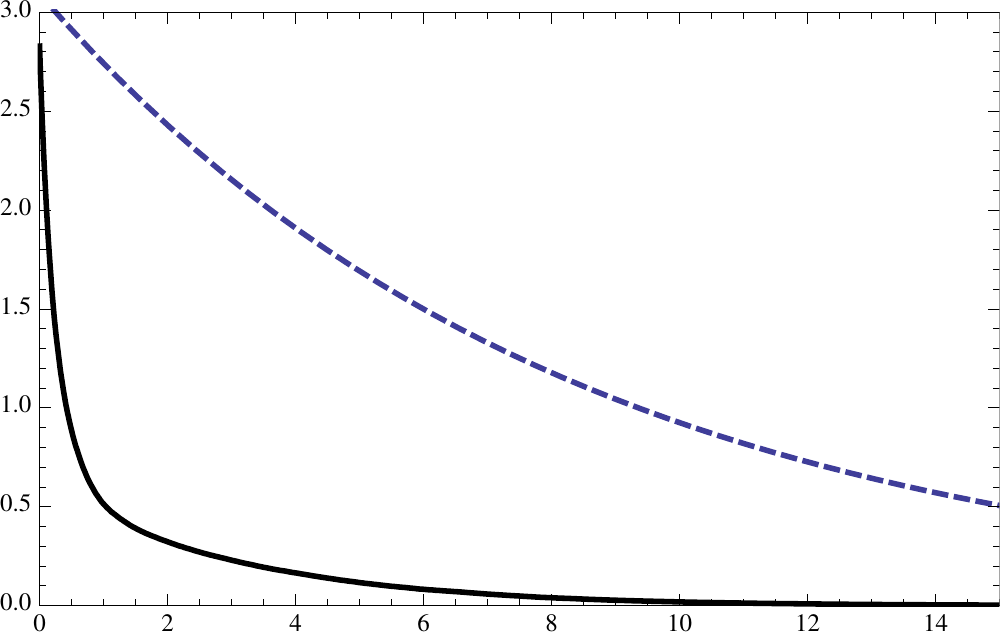}
  \caption{The ``true'' function $\dE_{x,y}\PAR{|X_t-Y_t|^{1/2}}$ (solid, black; 
  by Monte Carlo method with $m=10\ 000$ copies) and the bound given 
  in Equation \eqref{eq:contraction-racine} (dashed, blue), for $x=2$ and $y=10$.}
  \label{fig:conv12}
\end{figure}

\begin{rem} The upper bound obtained in Proposition \ref{prop:demi} is 
compared graphically to the ``true'' function 
$t\mapsto\dE_{x,y}\PAR{|X_t-Y_t|^{1/2}}$ in Figure \ref{fig:conv12}. By 
linear regression on this data, one gets that the exponential speed 
of convergence of this function is on the order of $0.4$.\\ 
Note also that this method can be adapted to any
  $V_p$ with $0<p<1$, giving even better (but less explicit) speed of
  convergence for some $p\neq\frac12$: we estimated numerically that
  the best value for $ \lambda$ would be approximately $0.1326$,
  obtained for $p$ close to $2/3$. 
\end{rem}

We may now deduce from Proposition~\ref{prop:demi} estimates for the 
Wasserstein distance between the laws of~$X_{t}$ and~$Y_{t}$. 

\begin{thm}\label{thm:convW1} Let $p\geq1$. Then, for any $t_0>0$ and 
any $\theta\in(0,1)$, there exists $C(p,t_0,\theta)<+\infty$ such that,  
for any initial conditions $(X_{0}, Y_{0})$ and for all $t\geq t_0$,
\[
\dE\PAR{\ABS{X_t-Y_t}^p}\leq
 C(p,t_0,\theta) \exp\PAR{- \lambda\theta t}
\]
 where $\lambda$ is defined by~\eqref{eq:constant}.
\end{thm}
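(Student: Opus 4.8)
The plan is to interpolate between the exponentially decaying half-moment bound of Proposition~\ref{prop:demi} and the uniform moment bounds of Lemma~\ref{le:momest}. The freedom to take $\theta<1$ is exactly what makes this work: we retain only a fraction $\theta$ of the rate $\lambda$ available for $\ABS{X_t-Y_t}^{1/2}$ and pay for it with a moment of $\ABS{X_t-Y_t}$ whose order blows up as $\theta\uparrow1$, which is why the constant is allowed to depend on $\theta$.

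Concretely, fix $p\geq1$, $t_0>0$ and $\theta\in(0,1)$, and set $\theta'=\theta/(2p)$, so that $2p\theta'=\theta\in(0,1)$. Writing
\[
\ABS{X_t-Y_t}^p=\ABS{X_t-Y_t}^{p\theta'}\cdot\ABS{X_t-Y_t}^{p(1-\theta')}
\]
and applying H\"older's inequality with the conjugate exponents $1/(2p\theta')$ and $1/(1-2p\theta')$ yields
\[
\dE\PAR{\ABS{X_t-Y_t}^p}\leq\dE\PAR{\ABS{X_t-Y_t}^{1/2}}^{\theta}\,\dE\PAR{\ABS{X_t-Y_t}^{r}}^{1-\theta},
\qquad r=\frac{2p-\theta}{2(1-\theta)}.
\]
For the first factor I would invoke the uniform bound~\eqref{eq:borne-racine}, which gives $\dE(\ABS{X_t-Y_t}^{1/2})\leq C_1(t_0)\,e^{-\lambda(t-t_0)}$ for every $t\geq t_0$ and every pair of initial conditions. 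For the second factor, note that $r\geq p\geq1$, so that $\ABS{X_t-Y_t}^r\leq2^{r-1}(X_t^r+Y_t^r)$, and Lemma~\ref{le:momest}, together with the fact that $t\mapsto(\sqrt{2r}+2r/t)^r$ is nonincreasing, bounds $\dE(\ABS{X_t-Y_t}^r)$ by a finite constant $K(p,t_0,\theta)$, again uniformly in the initial conditions for $t\geq t_0$. Multiplying the two estimates, the exponential factor becomes $e^{-\lambda\theta(t-t_0)}$; absorbing $e^{\lambda\theta t_0}$ and all remaining $t$-independent quantities into a single constant $C(p,t_0,\theta)$ gives the claim.

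There is no genuinely hard step here: the content is entirely in Proposition~\ref{prop:demi} --- hence in the Lyapunov estimate~\eqref{eq:uped} --- and in the uniform moment control of Lemma~\ref{le:momest}. The only points that need a little care are that $2p\theta'<1$, which makes the H\"older exponents admissible and is exactly ensured by $\theta<1$, and that the interpolated order $r$ is at least $1$ so that Lemma~\ref{le:momest} applies; both are immediate. The unavoidable deterioration of $C(p,t_0,\theta)$ as $\theta\uparrow1$, caused by $r\to\infty$, is intrinsic to this interpolation argument and is precisely the source of the arbitrarily small loss of rate in Theorem~\ref{th:wasserstein}.
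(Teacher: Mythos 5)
Your argument is correct and is essentially the paper's own proof: the same H\"older interpolation (your exponents reproduce exactly the paper's $r=\frac{2p-\theta}{2(1-\theta)}$), with the $\theta$-power factor controlled by the uniform half-moment decay \eqref{eq:borne-racine} of Proposition~\ref{prop:demi} and the $(1-\theta)$-power factor controlled by Lemma~\ref{le:momest} via $\abs{x-y}^r\leq 2^{r-1}(x^r+y^r)$. The checks you flag ($r\geq p\geq 1$, admissibility of the exponents, monotonicity in $t$ of the moment bound) are all valid, so nothing is missing.
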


\begin{proof}[Proof of Theorem~\ref{thm:convW1}] Let $p\geq1$. 
For any $0<\theta<1$, H\"older's inequality gives, for any $t\geq 0$,  
\[
  \dE\PAR{\abs{X_t - Y_t}^{p}} \leq 
  \SBRA{\dE\PAR{\abs{X_t - Y_t}^{\frac{2p-\theta}{2(1-\theta)}}}}^{1-\theta} 
  \SBRA{\dE\PAR{\abs{X_t - Y_t}^{1/2}}}^{\theta}.
\]
Thanks to Lemma~\ref{le:momest} and the inequality 
$(a+b)^q\leq2^{q-1}(a^q+b^q)$, when $q\geq1$, one gets 
\begin{align*}
\SBRA{\dE\PAR{\abs{X_t - Y_t}^{\frac{2p-\theta}{2(1-\theta)}}}}^{1-\theta}
&\leq \PAR{2^{\frac{2p-\theta}{2(1-\theta)}} 
M_{\frac{2p-\theta}{2(1-\theta)},t}}^{1-\theta}\\
&\leq 2^{p-\theta/2} 
\PAR{\sqrt{\frac{2p-\theta}{1-\theta}}+\frac{2p-\theta}{(1-\theta)t}}^{p-\theta/2}.
\end{align*}
Then, it suffices to use Equation~\eqref{eq:borne-racine} to conclude the proof
with
\[
 {C(p,t_0,\theta)}= 
 \frac{2^{p}}{M^{\theta/2}}\PAR{\sqrt{\frac{2p-\theta}{1-\theta}}
+\frac{2p-\theta}{(1-\theta)t_0}}^{p-\theta/2}
 \PAR{\sqrt 2+\frac{2}{t_0}}^{\theta/2} 
 e^{\lambda \theta t_0}\,.
\]
\end{proof}
Since $W_{p}(\law(X_{t}),\law(Y_{t}))^p\leq 
  \dE\PAR{\abs{X_t - Y_t}^{p}}$, Theorem~\ref{th:wasserstein} is a direct 
  consequence of this result. 

\begin{rem}\label{expcont}
Let us remark that we can obtain "contraction's type bounds"  using 
Equation~\eqref{eq:contraction-racine} instead of \eqref{eq:borne-racine} : 
for any $p\geq 1$, any $0<\theta<1$, any $t\geq 0$ and any $x,y>0$,
\[
\dE_{x,y}\PAR{\abs{X_t-Y_t}^p}\leq \frac{2^{p-\theta/2} }{M^{\theta/2}}
\PAR{\sqrt{\frac{2p-\theta}{1-\theta}}+\frac{2p-\theta}{(1-\theta)t}}^{p-\theta/2} \, e^{-\lambda\theta t}\abs{x-y}^{\theta/2}.
\]
We then obtain that if $\nu$ and $\tilde\nu$ have finite $\theta/2$-moments then for $p\geq1$
and $t\geq 0$,
\[ W_p(\nu P_t,\tilde \nu P_t)\leq \PAR{\frac{2^{p-\theta/2} }{M^{\theta/2}}
\PAR{\sqrt{\frac{2p-\theta}{1-\theta}}+\frac{2p-\theta}{(1-\theta)t}}^{p-\theta/2}}^{\frac1p} e^{-\frac{\lambda \theta}{p} t} \,W_{\frac\theta 2}(\nu,\tilde\nu)^{\frac{2}{p\theta}}\,.
\]
which still allows a control by some Wasserstein "distance" (in fact, this is not a distance, 
since $\theta/2<1$) of the initial measures.
\end{rem}

  \begin{rem}
    We estimated numerically the exponential speed of convergence:
\begin{itemize}
\item of the function $t\mapsto \dE_{x,y}\PAR{\abs{X_t - Y_t}}$ for 
the Wasserstein coupling (by Monte Carlo method and linear regression). 
It seems to be on the order of $0.5$ (we obtained $0.48$ for $x=2$, 
$y=10$, $m=10\ 000$ copies, and linear regression between times $2$ 
and $10$);
\item of the Wasserstein distance $t\mapsto W_1(\delta_xP_t,\delta_yP_t)$, 
using the explicit representation of this distance for measures on $\mathbb R$ 
to approximate it by the $L^1$-distance between the two empirical 
quantile functions. It is on the order of $1.6$ (we get $1.67$ for $x=2$, 
$y=0.5$, $m=1\ 000\ 000$ copies, and linear regression on $20$ points 
until time $4$).
 \end{itemize} 
In conclusion, our bound from Theorem \ref{thm:convW1} seems reasonable 
(at least when compared to those given by \cite{RR96}, see 
section \ref{sub:A bound via small sets} below), but is still off by a factor 
of $4$ from the true exponential speed of convergence. 
Since the coupling itself seems to converge approximately $3$ times 
slower than the true Wasserstein distance, one probably needs to find 
another coupling to get better bounds. 
\end{rem}

\begin{rem}
Let us end this section by advertising on a parallel work 
by B. Cloez \cite{cloez} who uses a completely different approach, 
based on a particular Feynman-Kac formulation, to get some related results.
\end{rem}
  
\section{Exponential convergence in total variation distance}\label{se:variation}

In this section, we provide the proof of Theorem~\ref{th:variation} and 
we compare our estimate to the ones that can be deduced from \cite{RR96}. 

\subsection{From Wasserstein to total variation estimate}

The fact that the evolution is partially deterministic makes the study 
of convergence in total variation quite subtle. Indeed,
the law $\delta_xP_t$ can be written as a mixture of a Dirac 
mass at $x+t$ (the first jump time has not occured) and an 
absolutely continuous measure (the process jumped at some 
random time in $(0,t)$): 
\[
\delta_xP_t=p_t(x)\delta_{x+t}+
(1-p_t(x))\cL(X_t\vert X_0=x,T_1\leq t)
\]
where, according to Equation~\eqref{eq:def-fx}, 
\begin{equation}\label{eq:pt}
p_t(x):=\dP_x(T_1>t)=e^{-t^2/2-xt}.
\end{equation}
This implies that the map $y\mapsto \TVNRM{\delta_xP_t-\delta_yP_t}$ 
is not continuous at point $x$ since one has, for any $y\neq x$,
\[
\TVNRM{\delta_xP_t-\delta_yP_t}\geq 
p_t(x)\vee p_t(y)=e^{-t^2/2-(x\wedge y) t}.
\] 
Nevertheless, one can hope that 
\[
\TVNRM{\delta_xP_t-\delta_yP_t}
\underset{y\to x}{\sim}
p_t(x).
\]
The lemma below makes this intuition more precise. 
\begin{lem}\label{lem:TV}
Let $t\geq\varepsilon\geq x-y> 0$. There exists a coupling 
$\PAR{{(X_t)}_{t\geq 0},{(Y_t)}_{t\geq 0}}$ of two TCP processes 
driven by \eqref{eq:G1} and starting at $(x,y)$ such that the probability 
$\dP(X_s=Y_s,\ s\geq t)$ is larger than 
\begin{align} 
\notag
 q_{t}(x,y) & =
 \left(\int_{x-y}^t\! (f_x(s)\wedge f_y(s-x+y))\,ds\right) 
 p_{x-y}\PAR{\frac{x+t}{2}}\\
 \label{eq:bou-q}
& \geq (p_\varepsilon(x)-p_t(x)-2\varepsilon\alpha(x))
p_{\varepsilon}\PAR{\frac{x+t}{2}},
\end{align}
where $f_x$ is defined in Equation~\eqref{eq:def-fx} and 
$\alpha(x):=\int_0^\infty\! e^{-u^2/2-ux}\,du$.

Moreover, for any $x_0>0$ and $\varepsilon>0$, let us define 
\begin{equation}\label{eq:def-A}
A_{x_0,\varepsilon}
=\BRA{(x,y)\,:\, 0\leq x,y\leq x_0,\ \ABS{x-y}\leq \varepsilon}.
\end{equation}
Then, 
\[
\inf_{(x,y)\in A_{x_0,\varepsilon}} q_t(x,y)\geq 
\exp\PAR{-\varepsilon^2-\frac{3x_0+t}{2}\varepsilon}
-e^{-t^2/2}-\sqrt{2\pi}\varepsilon. 
\]
\end{lem}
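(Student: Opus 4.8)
The plan is to build the coupling in two phases, matching the informal description in the introduction: run the Wasserstein coupling up to the first jump, then attempt a one-shot coalescence. Concretely, start the two processes at $(x,y)$ with $x>y$; since the lower coordinate increases at rate $1$ and the rate of the first jump of each coordinate is the current height, we let the higher coordinate $X$ jump first, at a time $S$ with density $f_x$, and independently let the lower coordinate $Y$ jump first at a time $S'$ with density $f_y$. After $X$ jumps it is at $(x+S)/2$; we want to choose the jump time of $Y$ so that $Y$ lands at exactly the same point at the same instant. Since at time $x-y$ the gap $x-y$ has been absorbed by the drift only if... more precisely, if $X$ jumps at time $s\in(x-y,t)$ then it sits at $(x+s)/2$; for $Y$, started at $y=x-(x-y)$, to be at $(x+s)/2$ at that same time having jumped once, it must have jumped at a time $s'$ with $(y+s')/2 + (s-s') = (x+s)/2$, i.e. $s' = s-(x-y)$, at which moment $Y$ was at $y+s'=x+s-2(x-y)$... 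I would instead use the cleaner bookkeeping already implicit in \eqref{eq:bou-q}: couple the jump time of $X$ to be $s$ and the jump time of $Y$ to be $s-(x-y)$, so that after both have jumped the two coordinates coincide and stay equal forever (they then evolve as a single TCP process). The standard maximal-coupling construction of the pair $(S, S-(x-y))$ with marginal densities $f_x(s)$ and $f_y(s-(x-y))$ succeeds with probability $\int_{x-y}^t (f_x(s)\wedge f_y(s-(x-y)))\,ds$ on the event that this common jump happens before time $t$; but we also need neither process to have jumped a \emph{second} time before $t$ and (crucially) $Y$ not to have jumped \emph{before} time $x-y$ (before the synchronization window opens). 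After the common jump at time $s$, the merged process sits at $(x+s)/2 \le (x+t)/2$, and the probability it makes no further jump in the remaining time is at least $p_{x-y}((x+t)/2)$ by monotonicity of $x\mapsto p_\tau(x)$ and $\tau\mapsto p_\tau(x)$; this yields the first displayed lower bound on $q_t(x,y)$.

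For the second inequality in \eqref{eq:bou-q}, I would lower-bound the overlap integral $\int_{x-y}^t (f_x(s)\wedge f_y(s-(x-y)))\,ds$. Writing $f_y(s-(x-y)) = (y+s-(x-y))e^{-(s-(x-y))^2/2-y(s-(x-y))} = (x+s-2(x-y))e^{\cdots}$ and comparing termwise with $f_x(s)=(x+s)e^{-s^2/2-xs}$, one checks that on $(x-y,t)$ the smaller of the two is at least $f_x(s)$ minus an error controlled by $2(x-y)\le 2\varepsilon$ times a bound on the relevant density; integrating $f_x$ over $(x-y,t)$ gives $p_{x-y}(x)-p_t(x)$ (recall $\int_a^b f_x = p_a(x)-p_b(x)$ from \eqref{eq:pt}), and bounding the prefactor loss and the difference of Gaussian-type exponentials by $2\varepsilon\int_0^\infty e^{-u^2/2-ux}\,du = 2\varepsilon\alpha(x)$ produces the factor $p_\varepsilon(x)-p_t(x)-2\varepsilon\alpha(x)$ once we also use $x-y\le\varepsilon$ to replace $p_{x-y}(x)$ by $p_\varepsilon(x)$ (note $p$ is decreasing in its time argument, so $p_{x-y}(x)\ge p_\varepsilon(x)$ — here one has to be mildly careful about the direction, which is why the statement allows any $\varepsilon\ge x-y$). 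Similarly $p_{x-y}((x+t)/2)\ge p_\varepsilon((x+t)/2)$.

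Finally, for the uniform bound over $A_{x_0,\varepsilon}$: on this set $x,y\le x_0$ and $|x-y|\le\varepsilon$, so I plug in crude estimates. We have $p_\varepsilon(x) = e^{-\varepsilon^2/2-x\varepsilon}\ge e^{-\varepsilon^2/2-x_0\varepsilon}$ and $p_\varepsilon((x+t)/2) = e^{-\varepsilon^2/2-(x+t)\varepsilon/2}\ge e^{-\varepsilon^2/2-(x_0+t)\varepsilon/2}$, while $p_t(x)\le e^{-t^2/2}$ and $\alpha(x)\le\alpha(0)=\int_0^\infty e^{-u^2/2}\,du=\sqrt{\pi/2}$, so $2\varepsilon\alpha(x)\le\sqrt{2\pi}\,\varepsilon$. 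Multiplying the lower bound for $p_\varepsilon((x+t)/2)$ into the bracket $(p_\varepsilon(x)-p_t(x)-2\varepsilon\alpha(x))$ and keeping only the dominant product $e^{-\varepsilon^2/2-x_0\varepsilon}\cdot e^{-\varepsilon^2/2-(x_0+t)\varepsilon/2} = e^{-\varepsilon^2-(3x_0+t)\varepsilon/2}$ for the first term, while crudely dropping the factor $p_\varepsilon((x+t)/2)\le 1$ on the two subtracted terms, gives exactly
\[
q_t(x,y)\ge \exp\PAR{-\varepsilon^2-\frac{3x_0+t}{2}\varepsilon}-e^{-t^2/2}-\sqrt{2\pi}\,\varepsilon,
\]
uniformly on $A_{x_0,\varepsilon}$. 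The main obstacle is the second step: making the termwise comparison $f_x(s)\wedge f_y(s-(x-y))\ge f_x(s) - (\text{error})$ rigorous while keeping the error in the clean form $2\varepsilon\alpha(x)$ after integration requires splitting the discrepancy into a polynomial-prefactor part and an exponential part and bounding each by a Gaussian integral; everything else is bookkeeping on the coupling construction and elementary monotonicity of $p_\tau(x)$ in both arguments.
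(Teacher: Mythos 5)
Your overall route is the same as the paper's: maximally couple $T^x_1$ with $T^y_1+(x-y)$ so that both paths sit at $(x+T^x_1)/2$ at time $T^x_1$, lower-bound the overlap integral by comparing $f_y(s-x+y)$ with $f_x(s)$, and finish with crude estimates on $A_{x_0,\varepsilon}$. The overlap step you flag as the ``main obstacle'' in fact goes through cleanly: with $d=x-y$ one has $f_y(s-d)=(s+x-2d)\exp\bigl(-s^2/2-xs+(2s+x-3d/2)d\bigr)$, and the extra exponential factor is $\geq 1$ for $s\geq d$, so only the polynomial prefactor loses $2d\leq 2\varepsilon$ and the error after integration is exactly $2\varepsilon\int_{x-y}^t e^{-s^2/2-xs}\,ds\leq 2\varepsilon\alpha(x)$; there is no unfavorable ``difference of exponentials'' to control. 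The final uniform bound is the same bookkeeping as in the paper.

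The genuine gap is the justification of the factor $p_{x-y}\bigl(\frac{x+t}{2}\bigr)$. You attach it to the event that the merged process makes no further jump between the coalescence time $s$ and $t$. That event is irrelevant (after coalescence the two paths share all subsequent jumps, so they stay equal whatever happens), and its probability is $p_{t-s}\bigl(\frac{x+s}{2}\bigr)$, which is \emph{not} bounded below by $p_{x-y}\bigl(\frac{x+t}{2}\bigr)$ since $t-s$ can be of order $t$ rather than $x-y$. The event you must control, and have omitted, is that $Y$ does not jump a \emph{second} time during the lag window of length $x-y$ between its own first jump and $X$'s first jump: in your construction $Y$ jumps first, at $s-(x-y)$ (your phrase ``let $X$ jump first'' contradicts your own formula $s'=s-(x-y)$), and if $Y$ jumps again before time $s$ the paths miss each other. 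Since $Y_{T^y_1}=(y+T^y_1)/2\leq (x+t)/2$ on the relevant event, the probability of no second jump of $Y$ within a window of length $x-y$ is at least $p_{x-y}\bigl(\frac{x+t}{2}\bigr)$ by monotonicity of $z\mapsto p_\tau(z)$; this is the condition $T^y_2-T^y_1\geq x-y$ in the paper and is what actually produces the second factor in $q_t(x,y)$. With that correction your argument coincides with the paper's proof.
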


\begin{proof}
 The idea is to construct an explicit coalescent coupling starting 
 from $x$ and $y$. The main difficulty comes from the fact that 
 the jump are deterministic. Assume for simplicity 
 that $y<x$. Let us denote by ${(T^x_k)}_{k\geq 1}$ and 
 ${(T^y_k)}_{k\geq 1}$ the jump times of the two processes. If 
\begin{equation}\label{eq:temps}
 T^x_1=T^y_1+x-y
 \quad\text{and}\quad 
 T^y_2-T^y_1\geq x-y
 \end{equation}
 then the two paths are at the same place at time $T^x_1$ since in this 
 case
\[
 X_{T^x_1}=\frac{x+T^x_1}{2}=
 \frac{y+T^y_1}{2}+T^x_1-T^y_1=Y_{T^x_1}.
\] 
The law of $T^x_1$ has a density $f_x$ given by~\eqref{eq:def-fx}.
 As a consequence, the density of $T^1_y+x-y$ is given by 
 $s\mapsto f_y(s-x+y)$. The best way of obtaining the first equality in 
 \eqref{eq:temps} before a time $t\geq x-y$ is to realize an optimal coupling
 of the two continuous laws of $T_1^x$ and $T_1^y+x-y$. This makes these
 random variables equal with probability  
\[
I(x,y,t)=\int_{x-y}^t\! (f_x(s)\wedge f_y(s-x+y))\,ds. 
\]
Assume now that $0\leq x-y\leq \varepsilon\leq t$. 
For any $s\geq x-y$,  one has
\begin{align*}
 f_y(s-x+y)&=(2y-x+s) \exp\PAR{-\frac{1}{2}(s-x+y)^2-y(s-x+y)}\\
 &=(s+x-2(x-y))\exp\PAR{-\frac{s^2}{2}-xs+(2s+x-3(x-y)/2)(x-y)}\\
 &\geq (s+x-2\varepsilon)\exp\PAR{-\frac{s^2}{2}-xs}.
 \end{align*}
As a consequence, if $0\leq x-y\leq \varepsilon\leq t$,
\begin{align*}
I(x,y,t)&\geq 
\int_{x-y}^t\!(s+x-2\varepsilon)\exp\PAR{-\frac{s^2}{2}-xs} \,ds \\
&\geq p_{\varepsilon}(x)-p_t(x)-2\varepsilon \alpha(x)
\end{align*}
where $p_t(x)$ is defined in \eqref{eq:pt} and 
$\alpha(x)=\int_0^\infty\! e^{-u^2/2-ux}\,du$.

Finally, one has to get a lower bound for the probability of the set 
$\BRA{T^y_2-T^y_1\geq x-y}$. For this, we notice that 
\[
z\in[0,+\infty) \mapsto \dP(T^z_1\geq s)=p_s(z)=e^{-s^2/2-s z}
\]
is decreasing and that $Y_{T^y_1}\leq (y+t)/2$ as soon as $T^y_1\leq t$. 
As a consequence, 
\[
\inf_{0\leq z\leq (x+t)/2} \dP(T^z_1\geq s)
\geq p_{x-y}\PAR{\frac{x+t}{2}}
\geq p_{\varepsilon}\PAR{\frac{x+t}{2}}.
\]
This provides the bound \eqref{eq:bou-q}. The uniform lower bound on the set 
$A_{x_0,\varepsilon}$ is a direct consequence of the previous one.
\end{proof}

Let us now turn to the proof of Theorem~\ref{th:variation}.
\begin{proof}[Proof of Theorem~\ref{th:variation}]
We are looking for an upper bound for the total variation distance between 
$\delta_xP_t$ and $\delta_yP_t$ for two arbitrary initial conditions $x$ and $y$. 
To this end, let us consider the following coupling: during a time $t_1<t$ we 
use the Wasserstein coupling. Then during a time $t_2=t-t_1$ we try to stick the 
two paths using Lemma~\ref{lem:TV}. Let $\varepsilon>0$ and $x_0>0$ be 
as in Lemma~\ref{lem:TV}. If, after the time $t_1$, one has 
\[
(X_{t_1},Y_{t_1})\in A_{\varepsilon,x_0}
\]
where $A_{\varepsilon,x_0}$ is defined by \eqref{eq:def-A}, then 
the coalescent coupling will work with a probability greater than  
the one provided by Lemma~\ref{lem:TV}. As a consequence, the 
coalescent coupling occurs before time $t_1+t_2$ with a probability 
greater than 
\[
\dP((X_{t_1},Y_{t_1})\in A_{\varepsilon, x_0})
\inf_{(x,y)\in A_{\varepsilon,x_0}}q_{t_2}(x,y). 
\]
Moreover, 
\[
\dP\PAR{(X_{t_1},Y_{t_1})\notin A_{\varepsilon,x_0}}
\leq \dP(X_{t_1}\geq x_0)+\dP(Y_{t_1}\geq x_0)
+\dP\PAR{\ABS{X_{t_1}-Y_{t_1}}\geq \varepsilon}.
\]
>From the deviation bound~\eqref{eq:deviation-t}, 
we get that, for any $x_0\geq 2e(1+1/t_1)$,  
\[
\dP(X_{t_1}\geq x_0) \leq e^{-a(t_1)x_0}
\quad\text{with}\quad
a(t_1)=\frac{t_1}{2e(t_1+1)}.
\]
The estimate of Proposition~\ref{prop:demi} concerning the Wasserstein 
coupling ensures that 
\[
 \dP(\ABS{X_{t_1}-Y_{t_1}}\geq \varepsilon)
   \leq \frac{C}{\sqrt{\varepsilon}} e^{-\lambda t_1}
 \quad\text{with} \quad
 C= \sqrt{\frac{2\sqrt 2+4t_0^{-1}}{M}} e^{\lambda t_0},
\]
for any $0<t_0\leq t_1$.
As a consequence, the total variation distance between $\delta_{x} P_{t}$ and 
$\delta_{y}P_{t}$ is smaller than 
\[
D:=1-e^{-\varepsilon^2-\frac{3x_0+t_2}{2}\varepsilon}
+e^{-t_2^2/2}+\sqrt{2\pi}\varepsilon+2e^{-a(t_1)x_0}+\frac{C}{\sqrt{\varepsilon}}
e^{-\lambda t_1}.
\]
In order to get a more tractable estimate, let us assume that $t_2\leq x_0$ and use that $1-e^{-u}\leq u$ to get 
\[
D\leq \varepsilon^2+2\varepsilon x_0
+e^{-t_2^2/2}
+\sqrt{2\pi}\varepsilon
+2e^{- a(t_{0})x_0}
+\frac{C}{\sqrt{\varepsilon}}e^{-\lambda t_1}.
\]
Finally let us set 
\[
t_2=\sqrt{2\log(1/\varepsilon)},\quad 
t_1=\frac{3}{2\lambda}\log(1/\varepsilon),\quad
x_0=\frac1{a(t_{0})}\log (1/\varepsilon).
\]
Obviously, for $\varepsilon$ small enough, $x_0\geq \max(t_2,2e(1+1/t_1))$ 
and $t_1 \geq t_{0}$. Then, one gets that 
\[
\NRM{\delta_xP_{t_1+t_2}-\delta_yP_{t_1+t_2}}_\mathrm{TV}\leq  
\frac2{a(t_{0})} \varepsilon\log (1/\varepsilon) 
+(3+C+\sqrt{2\pi}+\varepsilon)\varepsilon. 
\]
One can now express $\varepsilon$ as a function of $t_1$ to get that
there exists $K=K(t_{0})>0$ such that  
\[
\NRM{\delta_xP_{t_1+t_2}-\delta_yP_{t_1+t_2}}_\mathrm{TV}
\leq K(1+t_1) e^{-\frac{2\lambda}{3}t_1}. 
\] 
Since $t_2= \sqrt{(4\lambda/3)t_1}$, one gets that 
\[
\NRM{\delta_xP_t-\delta_yP_t}_\mathrm{TV}
\leq K(1+t)e^{K\sqrt t} e^{-\frac{2\lambda}{3}t}. 
\] 
This provides the conclusion of Theorem~\ref{th:variation} when 
the initial measures are Dirac masses. The generalisation is straightforward. 
\end{proof}

\subsection{A bound via small sets} 
\label{sub:A bound via small sets}
We describe here briefly the approach of \cite{RR96} and compare it
with the hybrid Wasserstein/total variation coupling described above.
The idea is once more to build a successful coupling between two
copies $X$ and $Y$ of the process. In algorithmic terms, the approach
is the following:
\begin{itemize}
    \item let  $X$ and $Y$ evolve independently until they both reach a given set $C$,
    \item once they are in $C$, try to stick them together,
    \item repeat until the previous step is successful.
\end{itemize}
To control the  time to come back to $C\times C$, \cite{RR96} advocates
an approach via a Lyapunov function.
The second step works with positive probability
if the set is ``pseudo-small'', i.e.\ if one can find a time $t^\star$, an $\alpha>0$ and probability measures $\nu_{xy}$
\begin{equation}
  \label{eq=coupled_minorization}
  \forall x, y \in C^2, \quad
  \law(X_{t^\star} | X_0 = x) \geq \alpha \nu_{xy} \text{ and }
  \law(X_{t^\star} | X_0 = y) \geq \alpha \nu_{xy}.
\end{equation}

 The convergence result can be stated as follows.
\begin{thm}[\cite{RR96}, Theorem 3, Corollary 4 and Theorem 8]\footnote{In fact, in \cite{RR96}, the result is given with $A$ instead of $A'$ in the upper bound. Joaquin Fontbona pointed out to us that Lemma 6 from \cite{RR96} has to be corrected, adding the exponential term $e^{\delta t^\star}$ to the estimate. We thank him for this remark.}
    Suppose that there exists a set $C$, a function $V\geq 1$, and
    positive constants $\lambda, \Lambda$ such that
    \begin{equation}
      \label{eq=lyapSimpl}
      L V \leq - \lambda V + \Lambda \ind_{C}.
    \end{equation}
    Suppose that $\delta = \lambda - \Lambda/\inf_{x\notin C} V >0$.
         Suppose additionally that $C$ is pseudo-small, with constant $\alpha$.

     Then for $A = \frac{\Lambda}{\delta} + e^{- \delta t^\star} \sup_{x\in C} V$, $A' = Ae^{\delta t^\star}$,
     and for any $r < 1/t^\star$,
    \[
    \TVNRM{\mathcal{L}(X_t) - \mu} \leq
    (1 - \alpha)^{\PENT{rt}} + e^{-\delta(t - t^\star)} (A')^{\PENT{rt} - 1}\dE V(X_0).
    \]
\end{thm}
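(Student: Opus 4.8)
The plan is to reduce the statement to a quantitative coupling estimate for the $t^\star$-skeleton chain, in the Meyn--Tweedie / Rosenthal spirit but with the pseudo-small refinement. First I would convert the infinitesimal drift condition \eqref{eq=lyapSimpl} into path estimates. Because $\delta=\lambda-\Lambda/\inf_{x\notin C}V$, one has for $x\notin C$ the improved bound $LV(x)\leq-\lambda V(x)\leq-\delta V(x)-\Lambda$, so that, with $\tau_C=\inf\{s\geq0:X_s\in C\}$, the process $s\mapsto e^{\delta(s\wedge\tau_C)}V(X_{s\wedge\tau_C})$ is a supermartingale; this yields control of the exponential moment of the return time to $C$. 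Combined with Dynkin's formula and Gronwall's lemma (using crudely $\ind_C\leq1$) it also gives the global bound $\mathbb{E}_x[V(X_t)]\leq e^{-\delta t}V(x)+\Lambda/\delta$. Inspecting the chain at the times $nt^\star$ and using $\sup_CV<\infty$, these two facts combine into a one-step geometric drift for the skeleton $\tilde X_n=X_{nt^\star}$ of the form $\mathbb{E}[V(\tilde X_{n+1})\mid\tilde X_n]\leq e^{-\delta t^\star}V(\tilde X_n)+b\,\ind_C(\tilde X_n)$, from which the constants $A=\Lambda/\delta+e^{-\delta t^\star}\sup_CV$ and $A'=Ae^{\delta t^\star}$ are read off — the extra factor $e^{\delta t^\star}$ in $A'$ accounting for the at-most-one-step lag between a chain entering $C$ and the next inspection time, which is precisely the point of the footnote's correction to Lemma~6 of \cite{RR96}.

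Next I would set up the coupling. Start $X$ from $\mathcal L(X_0)$ and $Y$ from the invariant law $\mu$, run the two copies independently, and inspect the pair at the times $nt^\star$. At every such time at which both $\tilde X_n$ and $\tilde Y_n$ lie in $C$, invoke the pseudo-small minorization \eqref{eq=coupled_minorization}: the conditional laws of $\tilde X_{n+1}$ and $\tilde Y_{n+1}$ both dominate $\alpha\,\nu_{\tilde X_n\tilde Y_n}$, so with probability $\alpha$ we may set $\tilde X_{n+1}=\tilde Y_{n+1}$ drawn from $\nu_{\tilde X_n\tilde Y_n}$ and declare the attempt successful, and with probability $1-\alpha$ we continue from the normalized residual laws. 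Since $\mu$ is invariant, $Y$ remains stationary for all time and, once the two chains meet, they can be made to coincide forever; hence $\TVNRM{\mathcal L(X_t)-\mu}\leq\mathbb{P}(\text{no attempt has succeeded by time }t)$. Note that stationarity of $Y$, together with $\int V\,d\mu\leq\Lambda/\delta<\infty$ coming from the drift condition, is what makes the $Y$-side contributions bounded, so only $\mathbb{E}V(X_0)$ survives in the final bound.

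The main work is the estimate of the failure probability; fix $r<1/t^\star$ and write $N=\PENT{rt}$. The coupling fails to have occurred by time $t$ only if either at least $N$ of the skeleton times $0,\dots,\PENT{t/t^\star}$ were "both in $C$" opportunities and every one of the corresponding independent attempts failed — contributing $(1-\alpha)^{N}$ — or fewer than $N$ such opportunities arose, i.e.\ at least $\PENT{t/t^\star}-N$ of the inspection times had $\tilde X_n\notin C$ or $\tilde Y_n\notin C$. The second event is controlled by a Markov inequality on $V(\tilde X_n)$ and $V(\tilde Y_n)$: by the geometric drift of the first step, $\mathbb{E}[V(\tilde X_n)]$ decays like $e^{-\delta n t^\star}$ down to the level $\Lambda/\delta$, and bounding the number of offending configurations over the at least $\PENT{t/t^\star}-N$ bad steps produces, after a telescoping product, a factor $(A')^{N-1}$ multiplied by $e^{-\delta(t-t^\star)}\mathbb{E}V(X_0)$. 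Adding the two contributions gives exactly the asserted inequality.

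The hard part is this last bookkeeping. One must simultaneously keep track of the Lyapunov function through the excursions of each chain outside $C$, align those excursions with the fixed lag $t^\star$ at which minorization is available, and pay the correct price — namely the factor $e^{\delta t^\star}$ distinguishing $A'$ from $A$ — for the delay of at most one inspection step between entering $C$ and the next coupling attempt. It is precisely here that the original argument of \cite{RR96} needed the correction flagged in the footnote, and it is this careful accounting, rather than any conceptual difficulty, that is the crux of the proof.
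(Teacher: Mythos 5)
This theorem is not proved in the paper at all: it is imported verbatim from \cite{RR96} (Theorem 3, Corollary 4 and Theorem 8), with only a footnote recording the correction $A\mapsto A'=Ae^{\delta t^\star}$. So there is no ``paper's proof'' to compare against; what you have written is a reconstruction of the Roberts--Rosenthal argument, and as an outline it is faithful to it. Your reduction of the infinitesimal drift to the skeleton drift is sound (the inequality $LV\leq-\delta V-\Lambda$ off $C$ does follow from the definition of $\delta$, and Gronwall gives $\dE_x V(X_{t^\star})\leq e^{-\delta t^\star}V(x)+\Lambda/\delta$, which is exactly where $A$ comes from), and the decomposition of the failure event into ``at least $\PENT{rt}$ joint visits to $C\times C$, all attempts fail'' versus ``fewer than $\PENT{rt}$ joint visits'' is the right one.

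Two points remain at the level of assertion rather than proof. First, the second event is not controlled by ``a Markov inequality on $V(\tilde X_n)$ and $V(\tilde Y_n)$'' applied stepwise; the actual mechanism (the corrected Lemma 6 of \cite{RR96}) is that $M_k=(A')^{-N_k}e^{\delta k t^\star}h(X_{kt^\star},Y_{kt^\star})$, with $N_k$ the number of joint visits to $C\times C$ before step $k$ and $h$ a bivariate Lyapunov function built from $V$, is a supermartingale, and a \emph{single} Markov inequality $\dP(N<j)\leq (A')^{j-1}\dE[(A')^{-N+1}]$ then yields the factor $(A')^{\PENT{rt}-1}e^{-\delta(t-t^\star)}$. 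Your phrase ``telescoping product'' points at this but you should exhibit the supermartingale explicitly, since the whole content of the footnote's correction lives in verifying the one-step supermartingale inequality when the pair sits in $C\times C$. Second, the final bound you state depends on $\dE V(X_0)$ alone, whereas the coupling argument naturally produces $\dE[h(X_0,Y_0)]$ with $Y_0\sim\mu$; passing from one to the other requires a domination such as $h(x,y)\leq V(x)V(y)$ for $V\geq1$ together with $\int V\,d\mu<\infty$, and this absorbs a constant that your sketch does not track. Neither point is a conceptual gap, but both are exactly the bookkeeping you yourself identify as ``the crux,'' and the proposal stops short of carrying it out.
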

If $A'$ is finite, this gives exponential convergence: just choose $r$ small
enough so that $(A')^{rt} e^{-\delta t}$ decreases exponentially fast.

To compute explicit bounds we have to make choices for $C$ and $V$ and
estimate the corresponding value of $\alpha$. Our best efforts 
for the case of the TCP process only give
decay rates of the order  $10^{-14}$. We believe this order cannot be substantially
improved even by fine-tuning $C$ and $V$.

\section{Two other models}\label{sec:examples}

This section is devoted to the study of two simple PDMPs. The first one is 
a simplified version of the TCP process where the jump rate is assumed 
to be constant and equal to~$\lambda$. It has been studied with different 
approaches: PDE techniques (see \cite{PR,Lau-Per}) or probabilistic tools
(see \cite{LL,MR2426601,CMP10}). The second one is studied in \cite{RT}. 
It can also be checked that our method gives sharp bounds for
the speed of convergence to equilibrium of the PDMP which appears in
the study of a penalized bandit algorithm (see Lemma 7 in \cite{MR2386736}).

\subsection{The TCP model with constant jump rate}

In this section we investigate the long time behavior of the TCP process 
with constant jump rate given by its infinitesimal generator: 
\[
Lf(x)=f'(x)+\lambda (f(x/2)-f(x))
\quad(x\geq 0). 
\]
The jump times of this process are the ones of a homogeneous Poisson 
process with intensity $\lambda$. The convergence in Wasserstein 
distance is obvious. 
\begin{lem}[\cite{PR,CMP10}]\label{wascoup}
For any $p\geq 1$, 
\begin{equation}\label{eq:Wasserstein-coupling}
W_p(\delta_xP_t,\delta_yP_t)\leq \ABS{x-y} e^{-\lambda_pt}
\quad\text{with}\quad
\lambda_p=\frac{\lambda(1-2^{-p})}{p}. 
 \end{equation}
\end{lem}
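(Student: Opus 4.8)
The plan is to construct an explicit Wasserstein coupling of the two processes started from $x$ and $y$ (say $x>y$) and control $\dE|X_t-Y_t|^p$ directly. Since the jump rate is now the constant $\lambda$, the natural coupling is to let the two copies share the \emph{same} Poisson clock of intensity $\lambda$: at each ring, both coordinates are simultaneously divided by $2$; between rings both increase linearly with slope $1$. This is the analogue of the Wasserstein coupling of Section~\ref{se:wasserstein}, but here there is no ``big one jumps alone'' term because the jump rate no longer depends on the position, so the two paths always jump together.

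Under this coupling the difference $D_t:=X_t-Y_t$ is deterministic given the jump times: it is unchanged by the drift (both slopes equal $1$) and multiplied by $1/2$ at each common jump. Hence if $N_t$ denotes the number of jumps of the shared Poisson process up to time $t$, one has exactly $|X_t-Y_t| = |x-y|\,2^{-N_t}$. Therefore
\[
\dE_{x,y}\PAR{|X_t-Y_t|^p} = |x-y|^p\,\dE\PAR{2^{-pN_t}}
= |x-y|^p \exp\PAR{-\lambda t(1-2^{-p})},
\]
using the probability generating function of a Poisson$(\lambda t)$ variable, $\dE(s^{N_t})=e^{-\lambda t(1-s)}$ with $s=2^{-p}$. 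Taking $p$-th roots gives $W_p(\delta_xP_t,\delta_yP_t)\leq |x-y|\,e^{-\lambda_p t}$ with $\lambda_p=\lambda(1-2^{-p})/p$, which is exactly the claimed bound.

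There is essentially no obstacle here: the only point needing a word of care is to justify that this prescription genuinely defines a Markov coupling whose marginals are each driven by the generator $Lf(x)=f'(x)+\lambda(f(x/2)-f(x))$, which is immediate since marginally each coordinate is a linear drift interrupted by a Poisson$(\lambda)$ stream of halvings — exactly the process of interest. One could alternatively phrase the computation through the generator $\gen V_p(x,y)=-\lambda(1-2^{-p})V_p(x,y)$ for $V_p(x,y)=|x-y|^p$ and integrate, but the pathwise argument above is cleaner and makes transparent why the convergence is genuinely exponential (and even deterministic once the jump times are fixed), in contrast with the TCP process where the ``lonely jumps'' spoil this structure.
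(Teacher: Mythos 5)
Your proof is correct and is essentially identical to the paper's own argument: the authors also couple the two copies through a single shared Poisson clock of intensity $\lambda$, observe that $|X_t-Y_t|=|x-y|\,2^{-N_t}$, and evaluate $\dE\PAR{2^{-pN_t}}=e^{-p\lambda_p t}$ via the Poisson generating function. Nothing is missing.
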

\begin{rem}
 The case $p=1$ is obtained in \cite{PR} by PDEs estimates using the 
 following alternative formulation of the Wasserstein distance on $\dR$. 
 If the cumulative distribution functions of the two probability 
 measures $\nu$ and $\tilde \nu$ are $F$ and $\tilde F$ then 
 \[
W_1(\nu,\tilde \nu)=\int_{\dR}\! \vert F(x)-\tilde F(x)\vert \,dx. 
\] 
\end{rem}
The general case $p\geq 1$ is obvious from the probabilistic point of view: 
choosing the same Poisson process ${(N_t)}_{t\geq 0}$ to drive 
the two processes provides that the two coordinates jump simultaneously 
and 
\[
\ABS{X_t-Y_t}=\ABS{x-y}{2}^{-N_t}. 
\]
As a consequence, since the law of $N_t$ is the Poisson distribution with 
parameter $\lambda t$, one has 
\[
\dE_{x,y}\PAR{\ABS{X_t-Y_t}^p}
=\ABS{x-y}^p \dE\PAR{2^{-pN_t}}
=\ABS{x-y}^p e^{-p\lambda_pt}.
\]
This coupling turns out to be sharp. Indeed, one can compute explicitly 
the moments of~$X_t$ (see \cite{LL,MR2426601}): for every 
$n\geq0$, every $x\geq0$, and every $t\geq0$,
\begin{equation}\label{eq:momlop}
  \dE_x((X_t)^n)=\frac{n!}{\prod_{k=1}^n\theta_k}+%
  n!\sum_{m=1}^n\biggr(\sum_{k=0}^m\frac{x^k}{k!}%
  \prod_{\substack{j=k\\j\neq m}}^n\frac{1}{\theta_j-\theta_m}\biggr)e^{-\theta_mt},
\end{equation}
where $\theta_n=\lambda(1-2^{-n})=n\lambda_n$ for any 
$n\geq1$. Obviously, assuming for example that $x>y$,
\begin{align*}
W_n(\delta_xP_t,\delta_yP_t)^n
&\geq \dE_x((X_t)^n)-\dE_y((Y_t)^n)\\
&\underset{t\to \infty}{\sim}
 n!\biggr(\sum_{k=0}^n\frac{x^k-y^k}{k!}%
  \prod_{\substack{j=k}}^{n-1}\frac{1}{\theta_j-\theta_n}\biggr)e^{-\theta_nt}.
\end{align*}
As a consequence, the rate of convergence in 
Equation~\eqref{eq:Wasserstein-coupling} is optimal for any $n\geq 1$.

Nevertheless this estimate for the Wasserstein rate of convergence 
does not provide on its own any information about the total variation 
distance between $\delta_xP_t$ and $\delta_yP_t$. It turns out that 
this rate of convergence is the one of the $W_1$ distance. This is 
established by Theorem~1.1 in \cite{PR}. It can be reformulated in our
setting as follows.  
\begin{thm}[\cite{PR}]
Let $\mu$ be the invariant measure of $X$. For any measure $\nu$ 
with a finite first moment and $t\geq 0$, 
\[
\TVNRM{\nu P_t -\mu} \leq 
e^{-\lambda t/2}\PAR{3\lambda W_1(\nu,\mu)+\TVNRM{\nu-\mu}}.
\]
\end{thm}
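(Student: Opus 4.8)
The plan is to reduce the total variation distance to a test-function estimate and then exploit the fact that each jump both contracts distances (by a factor $1/2$) and regularizes the law. Since $\mu$ is invariant, $\int\phi\,d(\nu P_t-\mu)=\int P_t\phi\,d(\nu-\mu)$ for every bounded measurable $\phi$, and using $P_t1=1$ together with $\int d(\nu-\mu)=0$ to recenter $\phi$ as $\psi=\phi-\tfrac12$ (so $\|\psi\|_\infty\le\tfrac12$),
\[
\TVNRM{\nu P_t-\mu}=\sup_{0\le\phi\le1}\int P_t\phi\,d(\nu-\mu)=\sup_{\|\psi\|_\infty\le1/2}\int P_t\psi\,d(\nu-\mu).
\]
First I would decompose $P_t\psi$ according to the number $N_t$ of jumps in $[0,t]$, which is Poisson$(\lambda t)$ and independent of the starting point: conditionally on $\{N_t=k\}$ one has $X_t=x/2^k+R_k$, where $R_k$ depends only on the jump times (uniform order statistics on $(0,t)$). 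Hence
\[
P_t\psi(x)=e^{-\lambda t}\psi(x+t)+h_t(x),\qquad h_t(x):=\sum_{k\ge1}\dP(N_t=k)\,\dE\bigl[\psi(x/2^k+R_k)\bigr].
\]
The atom at $x+t$ (no jump) will carry the irreducible $\TVNRM{\nu-\mu}$ part of the bound; the remainder $h_t$ should be Lipschitz with a constant decaying like $e^{-\lambda t/2}$.

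The heart of the argument is the Lipschitz estimate on $h_t$. Here I would condition in addition on the first $k-1$ jump times $S_1<\dots<S_{k-1}$: the last jump time $S_k$ is then uniform on $(S_{k-1},t)$, and $R_k$ is an affine function of $S_k$, hence conditionally uniform on an interval of length $(t-S_{k-1})/2$. Thus $u\mapsto\dE[\psi(u+R_k)\mid S_1,\dots,S_{k-1}]$ is a moving-average smoothing of $\psi$, with Lipschitz constant at most $4\|\psi\|_\infty/(t-S_{k-1})$. Averaging over $S_{k-1}$ via the moment identity $\dE[(t-S_{k-1})^{-1}]=k/t$ (because $(t-S_{k-1})/t$ is a Beta$(2,k-1)$ variable) and incorporating the $2^{-k}$ contraction coming from $x\mapsto x/2^k$, the $k$-th term contributes at most $\dP(N_t=k)\,2^{-k}\,4\|\psi\|_\infty k/t$ to $\mathrm{Lip}(h_t)$. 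Summing over $k$, and using $\sum_{k\ge1}\frac{(\lambda t)^k}{k!}\frac{k}{2^k}=\frac{\lambda t}{2}e^{\lambda t/2}$ — which is exactly where the rate $e^{-\lambda t/2}$, the $p=1$ Wasserstein contraction rate of Lemma~\ref{wascoup}, enters — one gets $\mathrm{Lip}(h_t)\le c\,\lambda e^{-\lambda t/2}$ for an explicit constant $c$.

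To conclude I would choose a coupling $(\xi,\eta)$ of $\nu$ and $\mu$ that is total-variation optimal and, on its disagreement set, the monotone (quantile) coupling of the normalized excess measures $\nu',\mu'$. Since the overlap cancels in the distribution functions, $F_\nu-F_\mu=\TVNRM{\nu-\mu}\,(F_{\nu'}-F_{\mu'})$, so this coupling simultaneously achieves $\dP(\xi\ne\eta)=\TVNRM{\nu-\mu}$ and $\dE\ABS{\xi-\eta}=W_1(\nu,\mu)$. Then $\int P_t\psi\,d(\nu-\mu)=\dE[h_t(\xi)-h_t(\eta)]+e^{-\lambda t}\dE[\psi(\xi+t)-\psi(\eta+t)]$, the first term bounded by $\mathrm{Lip}(h_t)\,\dE\ABS{\xi-\eta}$, the second by $2\|\psi\|_\infty e^{-\lambda t}\dP(\xi\ne\eta)$; taking the supremum over $\psi$ and using $e^{-\lambda t}\le e^{-\lambda t/2}$ yields $\TVNRM{\nu P_t-\mu}\le c\,\lambda e^{-\lambda t/2}W_1(\nu,\mu)+e^{-\lambda t/2}\TVNRM{\nu-\mu}$, which is the claimed estimate once the constant is tracked ($c\le3$ with the bounds above).

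The step I expect to be delicate is precisely the Lipschitz bound on $h_t$ at the correct rate: one must combine the distance contraction ($2^{-k}$) with the regularizing effect of the jumps — and the latter is only quantitatively controlled after the conditioning that makes $R_k$ conditionally uniform, so that the smoothing modulus is an inverse spacing with a computable mean — and then weight both effects by the Poisson law so that exactly $\sum_k 2^{-k}(\lambda t)^k/k!=e^{\lambda t/2}$ appears. A secondary subtlety is that the no-jump atom at $x+t$ is genuinely not Lipschitz in $x$, so it has to be carried separately and paired with the Lipschitz term through a coupling that is near-optimal for $W_1$ and for total variation at once, which is what the identity $F_\nu-F_\mu=\TVNRM{\nu-\mu}(F_{\nu'}-F_{\mu'})$ provides. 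Equivalently, as announced in the paper, the whole argument can be run at the PDE level on $W(t,x)=F_\mu(x)-F_{\nu P_t}(x)$, whose $L^1$-norm is $W_1(\nu P_t,\mu)$ and whose total-variation norm is $2\TVNRM{\nu P_t-\mu}$; the probabilistic formulation above merely makes the jump decomposition transparent.
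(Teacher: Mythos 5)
Your argument is correct, and in fact it establishes the paper's \emph{stronger} Proposition~\ref{pr:constantTCP}, namely $\TVNRM{\delta_xP_t-\delta_yP_t}\leq \lambda e^{-\lambda t/2}\ABS{x-y}+e^{-\lambda t}$, from which the stated bound (with the constant $3$ and the weaker rate $e^{-\lambda t/2}$ on the total variation term) follows immediately. Note that the paper itself does not prove the statement as quoted — it is cited from \cite{PR}, where it is obtained by PDE estimates on the cumulative distribution functions — but it does prove the improvement by a probabilistic coupling, and your proof is essentially the analytic dual of that coupling. The core computation is identical: you condition on $N_t$ and on the penultimate jump time, use that the last jump time is uniform on $(T_{N_t-1},t)$, invoke $\dE\bigl[(t-T_{N_t-1})^{-1}\mid N_t=k\bigr]=k/t$, and sum $\sum_k k\,2^{-k}(\lambda t)^k/k!=\tfrac{\lambda t}{2}e^{\lambda t/2}$ — exactly the quantities appearing in the paper's proof. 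The difference is in the packaging. The paper builds an explicit coalescent coupling (the last jump times are two uniforms on $(T_{N_t-1},t)$ offset by $(x-y)2^{-(N_t-1)}$, which merge with probability $1-\ABS{x-y}/(2^{N_t-1}(t-T_{N_t-1}))$); you instead record the same $1/L$ factor as the Lipschitz constant of the moving-average smoothing of a test function, i.e.\ you prove $\mathrm{Lip}(h_t)\leq\lambda e^{-\lambda t/2}$ for the absolutely continuous part of $P_t\psi$ and keep the Dirac part $e^{-\lambda t}\psi(x+t)$ separate. The two are equivalent, but your formulation makes the regularization mechanism explicit and transfers painlessly to other test-function norms. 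For general initial measures, the paper chooses the coupling of $(X_0,Y_0)$ according to whether $N_t=0$ or $N_t\geq1$ (which requires the small justification that this is admissible), whereas you use a single coupling that is simultaneously total-variation- and $W_1$-optimal, via the identity $F_\nu-F_\mu=\TVNRM{\nu-\mu}(F_{\nu'}-F_{\mu'})$ on $\dR$; this is a clean alternative and is correct. The only cosmetic point worth tightening is the constant: with $\NRM{\psi}_\infty\leq 1/2$ your Lipschitz bound already gives the coefficient $\lambda$ (not $3\lambda$) in front of $W_1(\nu,\mu)$, matching Proposition~\ref{pr:constantTCP}.
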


Let us provide here an improvement of this result by a probabilistic argument.

\begin{prop}
\label{pr:constantTCP}
For any $x,y\geq 0$ and $t\geq 0$, 
\begin{equation}\label{eq:constant-TV-xy}
\TVNRM{\delta_xP_t-\delta_yP_t}\leq 
\lambda e^{-\lambda t/2}\ABS{x-y}+e^{-\lambda t}. 
\end{equation}
As a consequence, for any measure $\nu$ with a finite first moment 
and $t\geq 0$, 
\begin{equation}\label{eq:constant-TV-ergo}
 \TVNRM{\nu P_t-\mu}\leq 
\lambda e^{-\lambda t/2}W_1(\nu,\mu)+
 e^{-\lambda t} \TVNRM{\nu-\mu} .
\end{equation}
\end{prop}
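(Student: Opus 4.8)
The plan is to prove the pointwise estimate~\eqref{eq:constant-TV-xy} by an explicit coupling and then to deduce~\eqref{eq:constant-TV-ergo} from it by a soft argument. For~\eqref{eq:constant-TV-xy} I would assume $x>y$, write $h=x-y$, and bound $\TVNRM{\delta_xP_t-\delta_yP_t}$ by $\dP(X_t\neq Y_t)$ for a well chosen coupling $(X,Y)$ of the two processes started at $(x,y)$. The building block is the synchronous coupling of Lemma~\ref{wascoup}, in which both coordinates share the same Poisson clock: then $\ABS{X_u-Y_u}=h\,2^{-N_u}$, where $N_u$ is the common number of jumps up to time $u$, so $\dE\ABS{X_u-Y_u}=h\,e^{-\lambda u/2}$ --- the gap contracts but the paths never meet. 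To force coalescence one uses that, for two piecewise-linear paths with slope~$1$ and multiplicative jumps, the \emph{lower} one must jump first: if $Y$ jumps and $X$ jumps exactly a time equal to the current gap later, a direct computation shows the two values then coincide, and they stay equal forever if the common clock is used afterwards. So I would run the synchronous coupling for a while and then switch to this ``lag'' maneuver, which goes through provided (i) a maximal coupling of the next inter-jump time of $X$ with a translate of it (the next inter-jump time of $Y$ plus the gap) succeeds, with probability $e^{-\lambda g}$ when the gap equals $g$, and (ii) $Y$ does not jump again within the lag window, again with probability $e^{-\lambda g}$. The $e^{-\lambda t}$ term in the statement accounts for the maneuver not being completed by time $t$, and it is essentially optimal: $\delta_xP_t$ has an atom of mass $e^{-\lambda t}$ at $x+t$ whereas $\delta_yP_t$ gives no mass to $[x+t,\infty)$, so $\TVNRM{\delta_xP_t-\delta_yP_t}\geq e^{-\lambda t}$ for every $x\neq y$.

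The delicate point, and the step I expect to be the main obstacle, is the bookkeeping needed to recover the precise constant $\lambda$ in front of $e^{-\lambda t/2}\ABS{x-y}$: one must choose when to leave the synchronous phase (equivalently, how many contracting jumps to allow), treat the events where the clock coupling fails or $Y$ jumps too soon (by falling back onto the synchronous coupling and halving the gap again), and combine the various failure probabilities with the identity $\dE\ABS{X_u-Y_u}=h\,e^{-\lambda u/2}$ via Markov's inequality. I expect the cleanest implementation to be a recursive bound obtained by conditioning on the first jump: starting from
\[
\delta_xP_t=e^{-\lambda t}\delta_{x+t}+\int_0^t\lambda e^{-\lambda\tau}\,\delta_{(x+\tau)/2}P_{t-\tau}\,d\tau,
\]
one couples the first jump either synchronously --- which reduces to the problem at time $t-\tau$ with gap $h/2$ --- or through the lag maneuver --- which coalesces when $Y$ does not re-jump within the window --- and then one checks that $(t,h)\mapsto\lambda e^{-\lambda t/2}h+e^{-\lambda t}$ is a supersolution of the resulting functional inequality.

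To pass from~\eqref{eq:constant-TV-xy} to~\eqref{eq:constant-TV-ergo}, write the Jordan decomposition $\nu-\mu=\rho_+-\rho_-$ with $\rho_+,\rho_-\geq0$ mutually singular and $\rho_+(\dR)=\rho_-(\dR)=\TVNRM{\nu-\mu}=:m$. The common mass of $\nu$ and $\mu$ cancels, so $\nu P_t-\mu P_t=\rho_+P_t-\rho_-P_t=\int(\delta_xP_t-\delta_yP_t)\,\Pi(dx,dy)$ for any coupling $\Pi$ of $\rho_+$ and $\rho_-$; as $\rho_+\perp\rho_-$, such a $\Pi$ is carried by $\{x\neq y\}$ and has total mass $m$. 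Bounding $\TVNRM{\delta_xP_t-\delta_yP_t}$ under the integral by~\eqref{eq:constant-TV-xy} yields
\[
\TVNRM{\nu P_t-\mu P_t}\leq\lambda e^{-\lambda t/2}\!\int\!\ABS{x-y}\,\Pi(dx,dy)+m\,e^{-\lambda t},
\]
and it then suffices to pick $\Pi$ realizing the optimal $L^1$ transport between $\rho_+$ and $\rho_-$ and to note that this cost is at most $W_1(\nu,\mu)$: for any coupling $\Gamma$ of $\nu$ and $\mu$, the restriction of $\Gamma$ to $\{x\neq y\}$ has marginals dominating $\rho_+$ and $\rho_-$, hence contains a sub-coupling of $\rho_+$ and $\rho_-$ of no larger cost, and minimizing over $\Gamma$ gives the bound. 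This establishes~\eqref{eq:constant-TV-ergo}.
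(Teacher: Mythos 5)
Your overall mechanism for \eqref{eq:constant-TV-xy} is the right one (synchronous jumps to contract the gap, then a maximal coupling that shifts one jump time by the current gap so the paths merge), but the quantitative heart of the proof is exactly the step you defer, and your proposed way of closing it does not obviously work. The difficulty is the \emph{scheduling} of the switch from the synchronous phase to the lag maneuver. A deterministic switching time $s$ gives a failure probability of order $\lambda\ABS{x-y}e^{-\lambda s/2}+e^{-\lambda(t-s)}$, which cannot match $\lambda e^{-\lambda t/2}\ABS{x-y}+e^{-\lambda t}$ for any choice of $s$; and the recursion you write down, if one always couples the first jump synchronously, yields an extra term $\lambda t\,e^{-\lambda t}$ (from $\int_0^t\lambda e^{-\lambda\tau}e^{-\lambda(t-\tau)}d\tau$), so $\phi(t,h)=\lambda e^{-\lambda t/2}h+e^{-\lambda t}$ is \emph{not} a supersolution of the natural functional inequality, and you have not exhibited the mixture of ``synchronous'' and ``lag'' moves that would make it one. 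The paper resolves this with an anticipating construction: fix the horizon $t$, condition on the total number of jumps $N_t$, keep the first $N_t-1$ jumps simultaneous, and shift only the \emph{last} jump time, which conditionally on $N_t$ and $T_{N_t-1}$ is uniform on $(T_{N_t-1},t)$; the success probability of that maximal coupling is $1-\ABS{x-y}2^{-N_t+1}/(t-T_{N_t-1})$, and the identity $\dE\bigl[(t-T_{N_t-1})^{-1}\,\vert\,N_t=n\bigr]=n/t$ turns the failure probability into $e^{-\lambda t}+\ABS{x-y}\,t^{-1}\dE\PAR{N_t2^{-N_t+1}}=e^{-\lambda t}+\lambda e^{-\lambda t/2}\ABS{x-y}$. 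This conditioning on $N_t$ (which also disposes of your worry (ii), since both paths make exactly $N_t$ jumps by construction) is the missing idea; without it, or without an actual verification of your supersolution claim, the constant in \eqref{eq:constant-TV-xy} is not established.

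Your derivation of \eqref{eq:constant-TV-ergo} from \eqref{eq:constant-TV-xy} is a genuinely different and arguably more general route than the paper's (the paper instead chooses the coupling of the initial measures according to whether $N_t=0$ or $N_t\geq1$). It is essentially correct, but one justification is false as stated: a measure on $\{x\neq y\}$ whose marginals dominate $\rho_+$ and $\rho_-$ need \emph{not} contain a sub-coupling of $\rho_+$ and $\rho_-$ (take $\Gamma=\frac12\delta_{(a_1,b_1)}+\frac12\delta_{(a_2,b_2)}$ with $a_2=b_1$, $\rho_+=\frac12\delta_{a_1}$, $\rho_-=\frac12\delta_{b_2}$). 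The inequality you actually need, namely that the optimal $L^1$ transport cost between $\rho_+$ and $\rho_-$ is at most $W_1(\nu,\mu)$, follows instead from Kantorovich--Rubinstein duality: both quantities equal $\sup\{\int f\,d(\nu-\mu):\LIP{f}\leq1\}$ since $\nu-\mu=\rho_+-\rho_-$. With that repair, the second half of your argument is fine.
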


\begin{rem}
  Note that the upper bound obtained in Equation \eqref{eq:constant-TV-xy} is non-null even for $x=y$. This is due to the persistence of a Dirac mass at any time, which implies that taking $y$ arbitrarily close to $x$ for initial conditions does not make the total variation distance arbitrarily small, even for large times.
\end{rem}

\begin{proof}[Proof of Proposition \ref{pr:constantTCP}]
The coupling is a slight modification of the one used to control Wasserstein distance.  
The paths of ${(X_s)}_{0\leq s\leq t}$ and ${(Y_s)}_{0\leq s\leq t}$ 
starting respectively from $x$ and $y$ are determined by their jump 
times ${(T^X_n)}_{n\geq 0}$ and ${(T^Y_n)}_{n\geq 0}$ up to time $t$. These
sequences have the same distribution than the jump times of a Poisson 
process with intensity $\lambda$. 

Let ${(N_t)}_{t\geq 0}$ be a Poisson process with intensity $\lambda$ 
and ${(T_n)}_{n\geq 0}$ its jump times with the convention $T_0=0$.
Let us now construct the jump times of $X$ and $Y$. Both  
processes make exactly $N_t$ jumps before time $t$. If $N_t=0$, then 
\[
X_s=x+s\quad \text{and}\quad Y_s=y+s\quad \text{for }0\leq s\leq t.
\]
Assume now that $N_t\geq 1$. The $N_t-1$ first jump times of $X$ and $Y$ 
are the ones of ${(N_t)}_{t\geq 0}$: 
\[
T^X_k=T^Y_k=T_k \quad 0\leq k\leq N_t-1.  
\] 
In other words, the coupling used to control Wassertein distance (see Lemma \ref{wascoup})
acts until the penultimate jump time $T_{N_t-1}$. At that time, we have 
\[
X_{T_{N_t-1}}-Y_{T_{N_t-1}}=\frac{x-y}{2^{N_t-1}}.
\]
Then we have to define the last jump time for each process. If they are 
such that 
\[
T^X_{N_t}=T^Y_{N_t}+X_{T_{N_t-1}}-Y_{T_{N_t-1}}
\]
then the paths of $X$ and $Y$ are equal on the 
interval $(T^X_{N_t},t)$ and can be chosen to be equal for any time 
larger than $t$.  

Recall that conditionally on the event $\BRA{N_t=1}$, the law of 
$T_1$ is the uniform distribution on $(0,t)$. More generally, if  $n\geq 2$, 
conditionally on the set $\BRA{N_t=n}$, the law of the penultimate jump 
time $T_{n-1}$ has a density $s\mapsto n(n-1)t^{-n}(t-s)s^{n-2}\ind_{(0,t)}(s)$ 
and conditionally on the event $\BRA{N_t=n, T_{n-1}=s}$, the law of $T_n$ 
is uniform on the interval~$(s,t)$. 

Conditionally on $N_t=n\geq 1$ and $T_{n-1}$, $T^X_n$ and $T^Y_n$ 
are uniformly distributed on $(T_{n-1},t)$ and can be chosen such that  
\begin{align*}
&\dP\PAR{T^X_n=T^Y_n+\frac{x-y}{2^{n-1}}
\,\Big\vert\, N^X_t=N^Y_t=n,\,T^X_{n-1}=T^Y_{n-1}=T_{n-1}}\\
&\quad\quad=\PAR{1-\frac{|x-y|}{2^{n-1}(t-T_{n-1})}}\vee 0
\geq 1-\frac{|x-y|}{2^{n-1}(t-T_{n-1})}.
\end{align*}
This coupling provides that 
\begin{align*}
\TVNRM{\delta_xP_t-\delta_yP_t}&\leq 
1-\dE\SBRA{\PAR{1-\frac{|x-y|}{2^{N_t-1}(t-T_{N_t-1})}}\ind_\BRA{N_t\geq1}}\\
&\leq e^{-\lambda t}+|x-y|
\dE\PAR{\frac{2^{-N_t+1}}{(t-T_{N_t-1})}\ind_\BRA{N_t\geq 1}}.
\end{align*}
For any $n\geq 2$, 
\[
\dE\PAR{\frac{1}{t-T_{N_t-1}}\Big\vert N_t=n}
=\frac{n(n-1)}{t^n}\int_0^t\! u^{n-2}\,du=\frac{n}{t}.
\]
This equality also holds for $n=1$. Thus we get that 
\[
\dE\PAR{\frac{2^{-N_t+1}}{(t-T_{N_t-1})}\ind_\BRA{N_t\geq 1}}
=\frac{1}{t}\dE\PAR{N_t 2^{-N_t+1}}=\lambda e^{-\lambda t/2},
\]
since $N_t$ is distributed according to the Poisson law with 
parameter $\lambda t$. This provides the 
estimate \eqref{eq:constant-TV-xy}.

To treat the case of general initial conditions and to get \eqref{eq:constant-TV-ergo}, we combine the coupling between the dynamics constructed above with the choice of the coupling of the initial measures $\mu$ and $\nu$ as a function of the underlying Poisson process ${(N_t)}_{t\geq0}$: the time horizon $t>0$ being fixed, if $N_t=0$, one chooses for $\law(X_0,Y_0)$ the optimal total variation coupling of $\nu$ and $\mu$; if $N_t\geq1$, one chooses their optimal Wasserstein coupling. One checks easily that this gives an admissible coupling, in the sense that its first (resp. second) marginal is a constant rate TCP process with initial distribution $\nu$ (resp. $\mu$). And one gets with this construction, using the same estimates as above in the case where $N_t\geq1$:
\begin{multline*}
 \dP\PAR{X_t\neq Y_t}= \dP\PAR{X_t\neq Y_t,N_t\geq 1}
 +\dP\PAR{X_t\neq Y_t,N_t=0}\\
 \leq \dE\PAR{\ABS{X_0-Y_0}\frac{2^{-N_t+1}}{(t-T_{N_t-1})}\ind_\BRA{N_t\geq 1}}+\dP\PAR{X_0\neq Y_0,N_t=0}\\
  = \lambda e^{-\lambda t/2}W_1(\nu,\mu)+
 e^{-\lambda t} \TVNRM{\nu-\mu}\,,
\end{multline*}
which clearly implies \eqref{eq:constant-TV-ergo}. 
\end{proof}

\subsection{A storage model example}\label{sec:storage}

In \cite{RT}, Roberts and Tweedie improve the approach from \cite{RR96} 
via Lyapunov functions and minorization conditions in the specific case 
of stochastically monotonous processes. They get 
better results on the speed of convergence to equilibrium in this
case. 
They give the following example of a storage model as a good illustration 
of the efficiency of their method.
The process ${(X_t)}_{t\geq 0}$ 
on $\dR^+$ is driven by the generator
\[
L f(x)=-\beta x f'(x)+\alpha\int_0^\infty\! (f(x+y)-f(x)) e^{-y}\,dy.
\]
In words, the current stock $X_t$ decreases exponentially at rate $\beta$, and increases at random exponential times by a random (exponential) amount.
Let us introduce a Poisson process ${(N_t)}_{t\geq 0}$ with 
intensity~$\alpha$ and jump times ${(T_i)}_{i\geq 0}$ (with $T_0=0$) and a 
sequence~${(E_i)}_{i\geq 1}$ of independent random variables 
with law~$\cE(1)$ independent of ${(N_t)}_{t\geq 0}$. 
The process ${(X_t)}_{t\geq 0}$ starting from $x\geq 0$ can be 
constructed as follows: for any $i\geq 0$, 
\[
X_t=
\begin{cases}
e^{-\beta (t-T_i)}X_{T_i}&\text{if }T_i\leq t<T_{i+1},\\
e^{-\beta (T_{i+1}-T_i)}X_{T_i}+E_{i+1} &\text{if }t=T_{i+1}. 
\end{cases}
\]

\begin{prop}\label{prop:storage}
For any $x,y\geq 0$ and $t\geq 0$, 
\[
W_p(\delta_xP_t,\delta_yP_t)\leq \ABS{x-y}e^{-\beta t},
\]
and (when $\alpha\neq\beta$)
\begin{equation}\label{eq:storage-TV}
\TVNRM{\delta_xP_t-\delta_yP_t}\leq 
e^{-\alpha t}+\ABS{x-y} \alpha\frac{e^{-\beta t}-e^{-\alpha t}}{\alpha-\beta}.
\end{equation}
Moreover, if $\mu$ is the invariant measure of the process $X$, 
we have for any probability measure $\nu$ with a finite first moment and 
$t\geq 0$,  
\[
\TVNRM{\nu P_t-\mu }\leq 
\TVNRM{\nu -\mu } e^{-\alpha t}+
W_1(\nu,\mu)\alpha\frac{e^{-\beta t}-e^{-\alpha t}}{\alpha-\beta}. 
\]
\end{prop}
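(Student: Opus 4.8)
The plan is to prove all three bounds with a single explicit coupling, built directly on the pathwise construction recalled just above the statement: drive both copies $X$ and $Y$ with the \emph{same} Poisson process ${(N_t)}_{t\geq0}$ (hence the same jump times ${(T_i)}_{i\geq0}$), and only adjust the choice of the exponential increments according to what is being controlled.

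\textbf{Wasserstein bound.} Here I would use the fully synchronous coupling, in which the increments $E_i$ are common to both processes. Between consecutive jumps the gap $X_s-Y_s$ is multiplied by $e^{-\beta(s-T_i)}$, and at a jump it is unchanged since the same quantity is added to both coordinates; hence $\ABS{X_t-Y_t}=\ABS{x-y}e^{-\beta t}$ almost surely, so that
\[
W_p(\delta_xP_t,\delta_yP_t)\leq\PAR{\dE\ABS{X_t-Y_t}^p}^{1/p}=\ABS{x-y}e^{-\beta t}
\]
for every $p\geq1$.

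\textbf{Total variation between Dirac masses.} Next I would keep the synchronous coupling only up to the penultimate jump $T_{N_t-1}$ (on $\BRA{N_t\geq1}$), so that just before the last jump the gap equals $e^{-\beta T_{N_t}}(x-y)$, and at the last jump replace the common increment by a maximal coupling of the two $\mathcal E(1)$ increments subject to the constraint $E^Y_{N_t}-E^X_{N_t}=e^{-\beta T_{N_t}}(x-y)$. A short computation with the exponential density shows this constraint can be met with conditional probability $\exp\PAR{-e^{-\beta T_{N_t}}\ABS{x-y}}$; when it is, $X_{T_{N_t}}=Y_{T_{N_t}}$ and the paths are kept equal up to time $t$ (no further jump occurs before $t$). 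Since the paths cannot meet on $\BRA{N_t=0}$, the inequality $1-e^{-u}\leq u$ gives
\[
\TVNRM{\delta_xP_t-\delta_yP_t}\leq e^{-\alpha t}+\ABS{x-y}\,\dE\SBRA{\ind_{\BRA{N_t\geq1}}e^{-\beta T_{N_t}}}.
\]
The only genuine computation left is this last expectation. I would either condition on $\BRA{N_t=n}$, use that the last jump time then has density $s\mapsto ns^{n-1}t^{-n}$ on $(0,t)$, and sum the resulting series, or, more quickly, note that for a Poisson process the backward recurrence time $t-T_{N_t}$ equals $V\wedge t$ with $V\sim\mathcal E(\alpha)$ and $\BRA{V\geq t}=\BRA{N_t=0}$, so that
\[
\dE\SBRA{\ind_{\BRA{N_t\geq1}}e^{-\beta T_{N_t}}}=e^{-\beta t}\int_0^t\alpha e^{(\beta-\alpha)s}\,ds=\alpha\,\frac{e^{-\beta t}-e^{-\alpha t}}{\alpha-\beta},
\]
which is exactly \eqref{eq:storage-TV}.

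\textbf{General initial laws.} Finally I would reuse the device from the proof of Proposition~\ref{pr:constantTCP}: let the driving Poisson process choose the coupling of $(X_0,Y_0)$ — the optimal total variation coupling of $\nu$ and $\mu$ on $\BRA{N_t=0}$, their optimal $W_1$ coupling on $\BRA{N_t\geq1}$ — and check, exactly as there, that this produces an admissible coupling of the two stationary-dynamics processes. On $\BRA{N_t=0}$ one has $X_t\neq Y_t$ iff $X_0\neq Y_0$, contributing $e^{-\alpha t}\TVNRM{\nu-\mu}$; on $\BRA{N_t\geq1}$ the last-jump argument together with the computation above contributes $W_1(\nu,\mu)\,\alpha\frac{e^{-\beta t}-e^{-\alpha t}}{\alpha-\beta}$; invariance of $\mu$ concludes. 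The main obstacle is not a single hard step but rather the combination of (a) checking that the $N_t$-dependent initial coupling is admissible, which is word-for-word the argument already made in the constant-rate case, and (b) the last-jump-time expectation, which carries the real content of the estimate.
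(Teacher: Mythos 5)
Your proposal is correct and follows essentially the same route as the paper: synchronous coupling for the Wasserstein bound, then the same Poisson skeleton with a maximal coupling of the exponential increments at the last jump before $t$ (succeeding with probability $e^{-\ABS{x-y}e^{-\beta T_{N_t}}}\geq 1-\ABS{x-y}e^{-\beta T_{N_t}}$), and the same $N_t$-dependent choice of initial coupling for general laws. The only cosmetic difference is that you evaluate $\dE\bigl[\ind_{\{N_t\geq1\}}e^{-\beta T_{N_t}}\bigr]$ via the backward recurrence time rather than by conditioning on $N_t=n$ and integrating the density of $T_n$, which yields the same value.
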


\begin{rem}
  In the case $\alpha = \beta$, the upper bound \eqref{eq:storage-TV} 
  becomes
\begin{equation*}
\TVNRM{\delta_xP_t-\delta_yP_t}\leq 
(1 + \ABS{x-y} \alpha t )e^{-\alpha t}.
\end{equation*}
\end{rem}

\begin{rem}[Optimality]
 Applying $L$ to the test function $f(x)=x^n$ allows us to compute recursively 
 the moments of $X_t$. In particular, 
 \[
\dE_x(X_t)=\frac{\alpha}{\beta}+\PAR{x-\frac{\alpha}{\beta}}e^{-\beta t}.
\]
This relation ensures that the rate of convergence for the Wasserstein distance 
is sharp. Moreover, the coupling of total variation distance requires at least one 
jump. As a consequence, the exponential rate of convergence is greater than 
$\alpha$. Thus, Equation~\eqref{eq:storage-TV} provides the optimal rate 
of convergence $\alpha\wedge\beta$. 
\end{rem}

\begin{rem}[Comparison with previous work]
By way of comparison, the original 
method of \cite{RT} does not seem to give these optimal rates. The case
 $\alpha=1$ and $\beta=2$ is treated in this paper (as an illustration of 
 Theorem 5.1), with explicit choices for the various parameters needed 
 in this method. With these choices, in order to get the convergence rate, 
 one first needs to compute the quantity $\theta$ (defined in Theorem 3.1), 
 which turns out to be approximately $5.92$. The result that applies is 
 therefore the first part of Theorem 4.1 (Equation (27)), and the convergence 
 rate is given by $\tilde{\beta}$ defined by Equation (22). The computation 
 gives the approximate value $0.05$, which is off by a factor $20$ 
 from the optimal value $\alpha\wedge \beta = 1$.  
\end{rem}

\begin{proof}[Proof of Proposition~\ref{prop:storage}]
Firstly, consider two processes $X$ and $Y$
starting respectively at $x$ and $y$ and driven by the same 
randomness (\emph{i.e.} Poisson process and jumps). Then the 
distance between $X_t$ and $Y_t$ is deterministic: 
\[
X_t-Y_t=(x-y) e^{-\beta t}. 
\] 
Obviously, for any $p\geq 1$ and $t\geq 0$, 
\[
W_p(\delta_xP_t,\delta_yP_t)\leq |x-y| e^{-\beta t}.
\]
Let us now construct explicitly a coupling at time $t$ to get the upper 
bound \eqref{eq:storage-TV} for the total variation distance. The jump times 
of ${(X_t)}_{t\geq 0}$ and ${(Y_t)}_{t\geq 0}$ are the ones of a 
Poisson process ${(N_t)}_{t\geq 0}$ with intensity $\alpha$ and 
jump times ${(T_i)}_{i\geq 0}$. Let us now construct the jump heights 
${(E^X_i)}_{1\leq i\leq N_t}$ and ${(E^Y_i)}_{1\leq i\leq N_t}$ of 
$X$ and $Y$ until time $t$. If $N_t=0$, no jump occurs. If $N_t\geq 1$, 
we choose $E^X_i=E^Y_i$ for $1\leq i\leq N_t-1$ and
$E^X_{N_t}$ and $E^Y_{N_t}$ in order to maximise the probability 
\[
\dP\PAR{X_{T_{N_t}}+E^X_{N_t}=Y_{T_{N_t}}+E^Y_{N_t}
\big\vert X_{T_{N_t}},Y_{T_{N_t}}}.
\]
This maximal probability of coupling is equal to 
\[
\exp\PAR{-|X_{T_{N_t}}-Y_{T_{N_t}}|}
=\exp\PAR{-|x-y|e^{-\beta T_{N_t}}}
\geq 1-|x-y|e^{-\beta T_{N_t}}.
\]
As a consequence, we get that 
\begin{align*}
\TVNRM{\delta_xP_t-\delta_yP_t}&\leq 
1-\dE\SBRA{\PAR{1-|x-y|e^{-\beta T_{N_t}}}\ind_\BRA{N_t\geq 1}}\\
&\leq e^{-\alpha t}+|x-y|\dE\PAR{e^{-\beta T_{N_t}}\ind_\BRA{N_t\geq 1}}. 
\end{align*}
The law of $T_n$ conditionally on the event $\BRA{N_t=n}$ has the density 
\[
u\mapsto n \frac{u^{n-1}}{t^n}\ind_{[0,t]}(u).
\] 
This ensures that
\[
\dE\PAR{e^{-\beta T_{N_t}}\ind_\BRA{N_t\geq 1}}
=\int_0^1\! e^{-\beta t v}\dE\PAR{N_tv^{N_t-1}}\,dv.
\]
Since the law of $N_t$ is the Poisson distribution with parameter $\lambda t$, one 
has 
\[
\dE\PAR{N_tv^{N_t-1}}=\alpha t e^{\alpha t(v-1)}. 
\]
This ensures that 
\[
\dE\PAR{e^{-\beta N_t}\ind_\BRA{N_t\geq 1}}
=\alpha \frac{e^{-\beta t}-e^{-\alpha t}}{\alpha-\beta}
\] 
which completes the proof. Finally, to get the last estimate, we proceed 
as follows: if $N_t$ is equal to 0, a coupling in total variation of the initial 
measures is done, otherwise, we use the coupling above (the method is exactly the same as 
for the equivalent result in Proposition \ref{pr:constantTCP}, see its proof
for details).  
\end{proof}

\section{The case of diffusion processes}\label{sec:diffusion}

Let us consider the process ${(X_t)}_{t\geq 0}$ on $\dR^d$ solution of 
\begin{equation}\label{eq:SDE}
dX_t=A(X_t)\,dt+\sigma(X_t)\,dB_t, 
\end{equation}
where ${(B_t)}_{t\geq 0}$ is a standard Brownian motion on $\dR^n$, 
$\sigma$ is a smooth function from $\dR^d$ to ${\mathcal M}_{d,n}(\dR)$ 
and $A$ is a smooth function from $\dR^d$
to $\dR^d$. Let us denote by ${(P_t)}_{t\geq 0}$ the semi-group associated 
to ${(X_t)}_{t\geq 0}$. If $\nu$ is a probability measure on $\dR^d$, 
$\nu P_t$ stands for the law of $X_t$ when the law of $X_0$ is $\nu$.  

Under ergodicity assumptions, we are interested in 
getting quantitative rates of convergence of $\cL(X_t)$ to its invariant 
measure in terms of classical distances (Wasserstein distances, total 
variation distance, relative entropy,\ldots).  Remark that if $A$ is not in gradient form (even if $\sigma$ is constant), $X_t$ is not reversible and the invariant measure is usually unknown, so that it is quite difficult to use functional inequalities such as Poincar\'e or logarithmic Sobolev to get a quantitative rate of convergence in total variation or Wasserstein distance (using for example Pinsker's inequality or more generally transportation-information inequality). 
Therefore the only general tool seems to be  Meyn-Tweedie's approach, via small sets and Lyapunov functions, as explained in Section 4.2. 
However, we have seen that in practical examples the resulting estimate can be quite poor. 

The main goal of this short section is to recall the known results establishing the decay in Wasserstein distance and then to propose a strategy to derive control in total variation distance.

\subsection{Decay in Wasserstein distance}

The coupling approach to estimate the decay in Wasserstein distance 
was recently put forward, see \cite{CGM08} and \cite{BGM11} or 
\cite{eberle}. It is robust enough to deal with nonlinear diffusions 
or hypoelliptic ones. In \cite{BGG}, the authors approach the problem 
directly, by differentiating the Wasserstein distance along 
the flow of the SDE. 

Let us gather some of the results in these papers in the following statement.  

\begin{prop}\label{prop:Wasserstein-diff}
\begin{enumerate}
\item Assume that there exists $\lambda>0$ such that for $p>1$
\begin{equation}\label{eq:coercivity}
 \frac{p-1}2(\sigma(x)-\sigma(y))(\sigma(x)-\sigma(y))^t
 +(A(x)-A(y))\cdot(x-y)\leq - \lambda \ABS{x-y}^2,
\quad x,y\in\dR^d. 
\end{equation} 
 Then, for any $\nu,\,\nu'\in\cP_p(\dR^d)$, one has
\begin{equation}\label{eq:conv-delta-delta}
W_p(\nu P_t,\nu' P_t)\leq e^{-\lambda t}W_p(\nu,\nu').  
\end{equation}
\item Assume that for all $x$, $\sigma(x)=\alpha \operatorname{I}_d$ for 
some constant $\alpha$ and that $A$ is equal to $-\nabla U$ where 
$U$ is a $\cC^2$ function such that 
$\operatorname{Hess}(U)\leq -K \operatorname{I}_d$ outside a ball $B(0,r_0)$ 
and $\operatorname{Hess}(U)\leq \rho \operatorname{I}_d$ inside this ball for 
some positive $\rho$. Then there exists $c>1,\alpha>0$ such that
\begin{equation}\label{eq:eberle}
W_1(\nu P_t,\nu' P_t)\leq c\,e^{-\alpha t}W_1(\nu,\nu').  
\end{equation}
\item Suppose that the diffusion coefficient $\sigma$ is constant. Assume that 
$A$ is equal to $-\nabla U$ where $U$ a $\cC^2$ convex function such that 
$\operatorname{Hess}(U)\leq \rho\operatorname{I}_d$ outside a ball $B(0,r_0)$, 
with $\rho>0$. Then there exists an invariant probability measure $\nu_\infty$
and $\alpha>0$ such that
\begin{equation}
\label{eq:bgg}
W_2(\nu P_t,\nu_\infty)\leq e^{-\alpha t}W_2(\nu,\nu_\infty).  
\end{equation}
\end{enumerate}
\end{prop}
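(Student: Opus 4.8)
The plan is to treat the three items essentially independently, since each comes from a different source (CGM08/BGM11, Eberle, BGG) but all three rely on the same basic device: couple two copies of the SDE driven by the \emph{same} Brownian motion (synchronous coupling) and control the evolution of a function of their difference.

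For item (1) I would let $X_0\sim\nu$, $X_0'\sim\nu'$ be an optimal $W_p$-coupling of the initial laws, and let $X_t, X_t'$ solve \eqref{eq:SDE} with the same $B_t$. Writing $Z_t = X_t - X_t'$, It\^o's formula applied to $|Z_t|^p$ gives a drift term controlled by $(A(X_t)-A(X_t'))\cdot Z_t$ together with a second-order term in $(\sigma(X_t)-\sigma(X_t'))(\sigma(X_t)-\sigma(X_t'))^t$; the precise combination that appears, after computing $d|Z_t|^p$ and taking expectations so the martingale part drops, is exactly the left-hand side of \eqref{eq:coercivity} times $p|Z_t|^{p-2}$. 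Hypothesis \eqref{eq:coercivity} then yields $\frac{d}{dt}\dE|Z_t|^p \leq -p\lambda\,\dE|Z_t|^p$, so Gr\"onwall gives $\dE|Z_t|^p\leq e^{-p\lambda t}\dE|Z_0|^p = e^{-p\lambda t}W_p(\nu,\nu')^p$, and taking $p$-th roots and using that $(X_t,X_t')$ is a (not necessarily optimal) coupling of $\nu P_t$ and $\nu' P_t$ gives \eqref{eq:conv-delta-delta}. One should note the boundary issue $|Z_t|=0$: for $p>1$ the function $z\mapsto|z|^p$ is $\cC^1$ and the computation is legitimate, or one regularizes $|z|^p$ by $(\varepsilon^2+|z|^2)^{p/2}$ and lets $\varepsilon\to0$.

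For items (2) and (3) I would simply cite the constructions in \cite{eberle} and \cite{BGG} respectively, since these are quoted verbatim. For (2), Eberle's argument replaces the synchronous coupling by a \emph{reflection} coupling and replaces the Euclidean distance by a concave distortion $f(|x-y|)$ carefully designed so that the generator of the coupled process applied to $f(|Z_t|)$ is bounded above by $-c'f(|Z_t|)$ globally, despite the Hessian of $U$ being only $\leq\rho$ (not dissipative) inside the ball; the constant $c>1$ appears because $f$ is only comparable to the identity up to a multiplicative constant. For (3), I would invoke the result of \cite{BGG}: convexity of $U$ gives monotonicity of the drift, hence nonexpansiveness of the synchronous coupling in $W_2$, and the strict contraction outside the ball plus a careful analysis along the flow upgrades this to exponential decay toward the invariant measure $\nu_\infty$, whose existence follows from the confinement provided by $\operatorname{Hess}(U)\leq\rho I_d$ at infinity together with tightness.

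The main obstacle, and the only place real work is needed, is item (1)'s It\^o computation — specifically checking that the second-order term organizes itself precisely into the $\frac{p-1}{2}(\sigma(x)-\sigma(y))(\sigma(x)-\sigma(y))^t$ shape appearing in \eqref{eq:coercivity}, including the combinatorial factor $p-1$ coming from the second derivative of $|z|^p$, and handling the non-smoothness at the origin. Items (2) and (3) are genuinely external inputs and I would not reprove them; the honest statement is that this proposition is a compilation, with only (1) carried out in detail.
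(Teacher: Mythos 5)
Your proposal takes essentially the same route as the paper: the paper proves item (1) by exactly this synchronous (same Brownian motion) coupling, notes the $p=2$ equivalence with the $W_2$-contraction via \cite{vRS}, and defers items (2) and (3) to Eberle's reflection-coupling argument and to \cite{BGG}, just as you do. The only detail worth flagging in your It\^o computation is that the factor $\frac{p-1}{2}$ comes from bounding $Z^t(\Delta\sigma)(\Delta\sigma)^tZ$ by $|Z|^2\,\mathrm{tr}\bigl((\Delta\sigma)(\Delta\sigma)^t\bigr)$, which gives the stated constant only for $p\geq 2$ (for $1<p<2$ one instead drops the negative second-order correction, yielding $\tfrac12$ in place of $\tfrac{p-1}{2}$), a point on which the paper itself is silent.
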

\begin{proof}
 The first point is usually proved using a trivial coupling (and it readily extends to 
 $p=1$ in the case of constant diffusion coefficient), namely considering the 
 same Brownian motion for two different solutions of the SDE starting 
 with different initial measures. Note also that, in the case $p=2$, the 
 coercivity condition~\eqref{eq:coercivity} is equivalent to the 
 uniform contraction property~\eqref{eq:conv-delta-delta} for the $W_2$
 metric (see \cite{vRS}). 

The second point  is due in this form to Eberle \cite{eberle}, using reflection 
coupling as presented by \cite{LR}, used originally in this form by Chen and Wang to prove spectral gap estimates (see a nice short proof in \cite[Prop. 2.8]{HSV}) in the reversible case.

Finally, the third part was proved by \cite{BGG} establishing the 
dissipation of the Wasserstein distance by an adequate functional inequality.
\end{proof}

\begin{rem}
\begin{itemize}
\item Let us remark that the contraction property of the two first points 
ensures the existence of an invariant probability measure $\nu_\infty$ 
and an exponential rate of convergence towards this invariant measure 
in Wasserstein distance.
\item Note also that in the hypoelliptic case of a kinetic Fokker-Planck equation:
\[
\begin{cases}
dx_t=v_t dt\\
dv_t=dB_t-F(x_t)dt-V(v_t)dt 
\end{cases}
\]
with $F(x)\sim ax$ and $V(v)\sim bv$ for positive $a$ and $b$, one has 
\[
W_2(\nu P_t,\nu'P_t)\le c\, e^{-\alpha t}W_2(\nu,\nu')
\]
for $c>1$ and a positive $\alpha$ (see \cite{CGM08}).
\end{itemize}
\end{rem}

\subsection{Total variation estimate}

If $\Sigma=0$ in Equation~\eqref{eq:SDE}, the process ${(X_t)}_{t\geq 0}$ 
is deterministic and its invariant measure is a Dirac mass at the unique point 
$\bar x\in\dR^d$ such that $A(\bar x)=0$. As a consequence, for any 
$x\neq\bar x$, 
\[
\TVNRM{\delta_xP_t-\delta_{\bar x}}=1
\quad\text{and}\quad
H(\delta_xP_t\vert\delta_{\bar x})=+\infty.
\]
A non-zero variance is needed to get a convergence estimate in total variation 
distance. Classically, the Brownian motion creates regularity and density. 
There are a lot of results giving regularity, in terms of initial points, of semigroup 
in small time. Let us quote the following result of Wang, which holds for processes
living on a manifold.

\begin{lem}[\cite{wang10}]\label{lem:lipschitz}
Suppose that $\sigma$ is constant and denote by $\eta$ the infimum
of its spectrum.  If $A$ is a $\cC^2$ function such that 
\[
\frac{1}{2}(\operatorname{Jac} A+\operatorname{Jac} A^T)
\geq K \operatorname{I}_d
\]
then there exists $K_\eta$ such that, for small $\varepsilon>0$, 
\[
\TVNRM{\delta_x P_\varepsilon-\delta_y P_\varepsilon}
\leq K_\eta\frac{\ABS{x-y}}{ \sqrt{\varepsilon}}.
\]
\end{lem}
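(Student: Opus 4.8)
The plan is to use \emph{coupling by change of measure}, which is essentially Wang's route to dimension-free Harnack inequalities. Fix $x,y\in\dR^d$ and a small $\varepsilon>0$, and on a filtered probability space carrying a Brownian motion $B$ let $(X_t)$ solve \eqref{eq:SDE} from $x$. I would then build an auxiliary process $(Y_t)$ started at $y$ by inserting a control into the drift, $dY_t=A(Y_t)\,dt+\sigma\,dB_t+v_t\,dt$, choosing the adapted control $v_t$ so that the difference $D_t:=X_t-Y_t$ follows the prescribed deterministic path $D_t=(1-t/\varepsilon)(x-y)$ on $[0,\varepsilon]$; since $\sigma$ is constant the Brownian parts cancel and one simply needs $v_t=A(X_t)-A(Y_t)+(x-y)/\varepsilon$. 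By construction $X_\varepsilon=Y_\varepsilon$ (and we may set $Y_t=X_t$ for $t\geq\varepsilon$). Girsanov's theorem then provides a probability measure $\dQ\sim\dP$ on $\cF_\varepsilon$ under which $(Y_t)_{0\le t\le\varepsilon}$ is a genuine weak solution of \eqref{eq:SDE} from $y$, with $\log\tfrac{d\dP}{d\dQ}\big|_{\cF_\varepsilon}=\int_0^\varepsilon\langle\sigma^{-1}v_s,dB_s\rangle+\tfrac12\int_0^\varepsilon|\sigma^{-1}v_s|^2\,ds$.

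The point of this construction is the elementary comparison $\TVNRM{\delta_xP_\varepsilon-\delta_yP_\varepsilon}\leq\TVNRM{\dP-\dQ}\big|_{\cF_\varepsilon}$: for any bounded $f$, since $\law^\dQ(Y_\varepsilon)=\delta_yP_\varepsilon$ while $Y_\varepsilon=X_\varepsilon$ $\dP$-a.s., one has $|\delta_xP_\varepsilon f-\delta_yP_\varepsilon f|=|\dE^\dP f(Y_\varepsilon)-\dE^\dQ f(Y_\varepsilon)|=|\dE^\dQ[f(Y_\varepsilon)(\tfrac{d\dP}{d\dQ}-1)]|\leq\|f\|_\infty\,\dE^\dQ|\tfrac{d\dP}{d\dQ}-1|$, which by definition of total variation yields the claim. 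Pinsker's inequality then gives $\TVNRM{\dP-\dQ}\big|_{\cF_\varepsilon}\leq\sqrt{\tfrac12 H(\dP\,|\,\dQ)}$, and because the $\dP$-martingale part of $\log\tfrac{d\dP}{d\dQ}$ has zero expectation, $H(\dP\,|\,\dQ)=\tfrac12\,\dE^\dP\!\int_0^\varepsilon|\sigma^{-1}v_s|^2\,ds$. Everything thus reduces to estimating the cost $\dE^\dP\!\int_0^\varepsilon|\sigma^{-1}v_s|^2\,ds$.

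Here the hypotheses enter. Along the coupling $|D_s|=(1-s/\varepsilon)|x-y|\le|x-y|$, and the assumption on $A$ together with its $\cC^2$ regularity furnishes a Lipschitz bound $|A(X_s)-A(Y_s)|\leq L|D_s|\leq L|x-y|$ on the relevant region, whence $|\sigma^{-1}v_s|\leq\eta^{-1}(L+1/\varepsilon)|x-y|$ since $\eta$ is the infimum of the spectrum of $\sigma$. Consequently $\dE^\dP\!\int_0^\varepsilon|\sigma^{-1}v_s|^2\,ds\leq\varepsilon\,\eta^{-2}(L+1/\varepsilon)^2|x-y|^2$, which for $\varepsilon\leq1$ is at most $\eta^{-2}(L+1)^2|x-y|^2/\varepsilon$. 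Plugging this into the previous chain gives $\TVNRM{\delta_xP_\varepsilon-\delta_yP_\varepsilon}\leq K_\eta\,|x-y|/\sqrt\varepsilon$ with, say, $K_\eta=(L+1)/(2\eta)$.

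The main obstacle is making the Girsanov step rigorous when $A$ is only locally Lipschitz and the state space is non-compact, since then $v$ need not satisfy Novikov's condition a priori; I would handle this by stopping the coupling when $X$ or $Y$ leaves a large ball, noting that for small $\varepsilon$ this has probability $o(1)$ (and, after optimizing the radius, contributes only $O(|x-y|/\sqrt\varepsilon)$ to the total variation distance) and that on the stopped paths $v$ is bounded, so Novikov holds. In the manifold setting in which Wang states the result this difficulty does not arise: there one replaces the linear interpolation of $D_t$ by moving $Y_t$ along the minimizing geodesic joining it to $X_t$, and the one-sided bound $\tfrac12(\operatorname{Jac}A+\operatorname{Jac}A^T)\geq K\operatorname{I}_d$ (a lower Bakry--\'Emery--Ricci curvature bound) is exactly what keeps the radial ODE for $d(X_t,Y_t)$, and hence the cost integral, under control — yielding the constant $K_\eta$ with a dependence on $K$, $\eta$ and, mildly, the time horizon.
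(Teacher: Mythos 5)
The paper does not actually prove this lemma: it is quoted from Wang \cite{wang10}, and the accompanying remark only says that the proof there goes through Harnack's and Pinsker's inequalities. Your coupling-by-change-of-measure argument is precisely the engine behind Wang's Harnack inequalities, so you are on the intended route; moreover the reduction $\TVNRM{\delta_xP_\varepsilon-\delta_yP_\varepsilon}\leq\TVNRM{\dP-\dQ}\big|_{\cF_\varepsilon}$, the Girsanov identity, and the entropy computation $H(\dP\,|\,\dQ)=\tfrac12\dE^\dP\!\int_0^\varepsilon\ABS{\sigma^{-1}v_s}^2ds$ are all correct.

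The genuine gap is in the cost estimate. The hypothesis gives only a one-sided bound on $\tfrac12(\operatorname{Jac}A+\operatorname{Jac}A^T)$ plus $\cC^2$ regularity; this does not make $A$ globally Lipschitz, so the inequality $\ABS{A(X_s)-A(Y_s)}\leq L\ABS{x-y}$ with a position-independent $L$ is not available, and your $K_\eta=(L+1)/(2\eta)$ is not a constant. The stopping-time fix does not repair this: the Lipschitz constant of $A$ on $B(0,R)$ is unbounded in $R$, the pre-$\varepsilon$ exit probability is not small uniformly in the starting points, and the lemma is applied later in the paper along a Wasserstein coupling of arbitrary measures, so $K_\eta$ really must be uniform in $x,y$. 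The repair is exactly the mechanism you relegate to the manifold case, and it works verbatim in $\dR^d$: take the control parallel to $D_t=X_t-Y_t$, say $v_t=\xi_t D_t/\ABS{D_t}$ with a scalar $\xi_t$, and do \emph{not} put $A(X_t)-A(Y_t)$ into $v_t$. Then $\tfrac{d}{dt}\ABS{D_t}=\langle D_t/\ABS{D_t},A(X_t)-A(Y_t)\rangle-\xi_t$, the one-sided bound on the symmetrized Jacobian controls the first term by a constant times $\ABS{D_t}$, a $\xi_t$ of order $\ABS{x-y}/\varepsilon$ forces coalescence before time $\varepsilon$, and the cost $\int_0^\varepsilon\xi_s^2\,ds$ is $O(\ABS{x-y}^2/\varepsilon)$ with constants depending only on $K$ and $\eta$. (Note in passing that what this radial ODE uses is an \emph{upper} bound on the symmetrized Jacobian, so the sign convention in the stated hypothesis deserves a second look.) With that control, the rest of your argument goes through unchanged.
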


\begin{rem}$\ $
\begin{itemize}
\item
There are many proofs leading to this kind of results, see for example 
Aronson \cite{aronson} for pioneering works, and \cite{wang10} using 
Harnack's  and Pinsker's inequalities.
\item Note that in \cite{GW}, an equivalent bound was given for the 
kinetic Fokker-Planck equation but with $\varepsilon$ replaced 
by $\varepsilon^3$.
\end{itemize}
\end{rem}

Now that we have a decay in Wasserstein distance and a control on the total variation distance after
a small time, we can use the same idea as for the TCP process. As a consequence, we get the following result.
\begin{thm} Assume that $\sigma$ is constant. 
Under Points 1. or 2. of Proposition \ref{prop:Wasserstein-diff}, 
one has, for any $\nu$ and $\tilde \nu$ in $\cP_1(\dR^d)$, 
\[
\TVNRM{\nu P_t-\tilde \nu P_t}\leq 
\frac{K e^{\lambda \varepsilon}}{\sqrt\varepsilon} 
W_1(\nu,\tilde \nu) e^{-\lambda t}.
\]
Under Point 3. of Proposition \ref{prop:Wasserstein-diff}, one 
has, for any $\nu$ and $\tilde \nu$ in $\cP_2(\dR^d)$, 
\[
\TVNRM{\nu P_t-\tilde \nu P_t}\leq 
\frac{K e^{\lambda \varepsilon}}{ \sqrt\varepsilon} 
W_2(\nu,\tilde \nu) e^{-\lambda t}.
\]
\end{thm}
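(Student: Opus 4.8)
The plan is to reproduce, in the diffusive setting, the hybrid coupling used for the TCP process in Section~\ref{se:variation}: follow the Wasserstein coupling for almost all of the time, and then exploit the smoothing effect of the Brownian motion to glue the two laws together in total variation over a short final interval. Here the short interval is simply the fixed small time $\varepsilon>0$ of Lemma~\ref{lem:lipschitz}. The backbone is the semigroup identity $P_t=P_{t-\varepsilon}\circ P_\varepsilon$ together with the elementary observation that, for any probability measures $\mu_1,\mu_2$ and any coupling $\Pi\in\Gamma(\mu_1,\mu_2)$, one has $\mu_i P_\varepsilon=\int\delta_{z_i}P_\varepsilon\,\Pi(dz_1,dz_2)$, whence by convexity of the total variation norm
\[
\TVNRM{\mu_1P_\varepsilon-\mu_2P_\varepsilon}\leq\int\TVNRM{\delta_{z_1}P_\varepsilon-\delta_{z_2}P_\varepsilon}\,\Pi(dz_1,dz_2).
\]

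First I would fix a sufficiently small $\varepsilon>0$ (as in Lemma~\ref{lem:lipschitz}) and $t\geq\varepsilon$, and apply this with $\mu_1=\nu P_{t-\varepsilon}$ and $\mu_2=\tilde\nu P_{t-\varepsilon}$, so that $\mu_1P_\varepsilon=\nu P_t$ and $\mu_2P_\varepsilon=\tilde\nu P_t$. Lemma~\ref{lem:lipschitz} then bounds the integrand by $K_\eta\ABS{z_1-z_2}/\sqrt\varepsilon$, and optimizing over the coupling $\Pi$ gives
\[
\TVNRM{\nu P_t-\tilde\nu P_t}\leq\frac{K_\eta}{\sqrt\varepsilon}\,W_1\PAR{\nu P_{t-\varepsilon},\tilde\nu P_{t-\varepsilon}}.
\]
It remains to plug in the Wasserstein contraction. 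Under Point~1 of Proposition~\ref{prop:Wasserstein-diff} (which, $\sigma$ being constant, extends to $p=1$), estimate \eqref{eq:conv-delta-delta} gives $W_1(\nu P_{t-\varepsilon},\tilde\nu P_{t-\varepsilon})\leq e^{-\lambda(t-\varepsilon)}W_1(\nu,\tilde\nu)$; under Point~2 the same holds up to the multiplicative constant $c$ of \eqref{eq:eberle}. Writing $e^{-\lambda(t-\varepsilon)}=e^{\lambda\varepsilon}e^{-\lambda t}$ and absorbing $K_\eta$ (resp. $cK_\eta$) into $K$ yields the first claimed inequality. For Point~3 I would first use $W_1\leq W_2$ and then the $W_2$-contraction \eqref{eq:bgg}, which gives the second inequality with $W_2$ in place of $W_1$.

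I expect the only genuinely delicate point to be the applicability of Lemma~\ref{lem:lipschitz}, which we are entitled to quote: its hypotheses --- $A\in\cC^2$ with $\tfrac12(\operatorname{Jac}A+\operatorname{Jac}A^T)$ bounded below --- are automatic under Point~2, where $\operatorname{Hess}U$ is bounded both above and below, but must be adjoined to the assumptions of Point~1. The short-time blow-up $\varepsilon^{-1/2}$ is harmless, since $\varepsilon$ is a fixed constant absorbed into $K$; this is precisely the mechanism making the argument work, and it is in fact softer than in the TCP case because no optimisation over the splitting time is needed. A minor subtlety remains in the Point~3 case: \eqref{eq:bgg} is stated only as contraction towards the invariant measure $\nu_\infty$, so to treat an arbitrary pair $(\nu,\tilde\nu)$ one should invoke the stronger pairwise contraction furnished by the same Wasserstein-dissipation argument, or else restrict to $\tilde\nu=\nu_\infty$ and use the triangle inequality for $W_2$.
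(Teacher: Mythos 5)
Your proof is correct and follows essentially the same route as the paper: factor $P_t=P_{t-\varepsilon}P_\varepsilon$, use Lemma~\ref{lem:lipschitz} (via convexity of the total variation norm) to pass to $W_1(\nu P_{t-\varepsilon},\tilde\nu P_{t-\varepsilon})$, and then apply the Wasserstein contraction with $e^{-\lambda(t-\varepsilon)}=e^{\lambda\varepsilon}e^{-\lambda t}$. Your remarks on the hypotheses needed for Lemma~\ref{lem:lipschitz} under Point~1 and on the fact that \eqref{eq:bgg} is only stated as contraction towards $\nu_\infty$ are legitimate points that the paper's own (very terse) proof passes over in silence.
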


\begin{proof}
 
 Using first Lemma ~\ref{lem:lipschitz} and then Point 1. of Proposition \ref{prop:Wasserstein-diff}, we get
 
 \begin{eqnarray*}
\TVNRM{\nu P_t-\tilde \nu P_t}&=&\TVNRM{\nu P_{t-\varepsilon}P_\varepsilon-\tilde \nu P_{t-\varepsilon}P_\varepsilon}\\
&\le&\frac K{\sqrt{\varepsilon}} W_1(\nu P_{t-\varepsilon},\tilde\nu P_{t-\varepsilon})\\
&\le&\frac{K e^{\lambda \varepsilon}}{\sqrt\varepsilon} 
W_1(\nu,\tilde \nu) e^{-\lambda t}.
 \end{eqnarray*}
 The proof of the second assertion is similar, except the use of Point 3. of Proposition \ref{prop:Wasserstein-diff} in the second step.
\end{proof}

\begin{rem}
Once again, one can give in the case of kinetic Fokker-Planck equation 
estimate in total variation distance, using the previous remarks 
(see \cite{MR2381160} for qualitative results).
\end{rem}

\bigskip
\noindent
\textbf{Acknowledgements.} Part of this work was done during a short (resp. long) visit of F.M. (resp. J.B.B.) to CMM (Santiago, Chile). They both thank colleagues from CMM for their warm hospitality, and particularly Joaquin Fontbona for interesting discussions. 
Another part of this work was done during a visit of F.M. and P.-A. Z. to the University of Neuch\^{a}tel; they 
thank Michel Bena\"im for his hospitality and additional interesting discussions. A.C. acknowledges the hospitality of IRMAR (Rennes, France). The research of A.C. is sponsored by MECESUP Doctoral Fellowship UCH 0410/07-10, MECESUP Doctoral Fellowship for sojourn UCH0607 and CONICYT Doctoral Fellowship/11. This research was partly supported by the French ANR projects EVOL and ProbaGeo.\\
We deeply thank the referee for his/her quick and constructive report.

\bibliographystyle{amsalpha}
\bibliography{tcptv}

\providecommand{\bysame}{\leavevmode\hbox to3em{\hrulefill}\thinspace}
\providecommand{\MR}{\relax\ifhmode\unskip\space\fi MR }
\providecommand{\MRhref}[2]{%
  \href{http://www.ams.org/mathscinet-getitem?mr=#1}{#2}
}
\providecommand{\href}[2]{#2}
\begin{thebibliography}{OKM96}

\bibitem[Aro67]{aronson}
D.~G. Aronson, \emph{Bounds for the fundamental solution of a parabolic
  equation}, Bull. Amer. Math. Soc. \textbf{73} (1967), 890--896. \MR{0217444
  (36 \#534)}

\bibitem[BCG08]{MR2381160}
D.~Bakry, P.~Cattiaux, and A.~Guillin, \emph{Rate of convergence for ergodic
  continuous {M}arkov processes: {L}yapunov versus {P}oincar\'e}, J. Funct.
  Anal. \textbf{254} (2008), no.~3, 727--759. \MR{MR2381160}

\bibitem[BGG12]{BGG}
F.~Bolley, I.~Gentil, and A.~Guillin, \emph{Convergence to equilibrium in
  {W}asserstein distance for {F}okker--{P}lanck equations}, J. Funct. Anal.
  \textbf{263} (2012), no.~8, 2430--2457. \MR{2964689}

\bibitem[BGM10]{BGM11}
F.~Bolley, A.~Guillin, and F.~Malrieu, \emph{Trend to equilibrium and particle
  approximation for a weakly selfconsistent {V}lasov-{F}okker-{P}lanck
  equation}, M2AN Math. Model. Numer. Anal. \textbf{44} (2010), no.~5,
  867--884. \MR{2731396 (2012a:82061)}

\bibitem[CD08]{MR2385873}
O.~L.~V. Costa and F.~Dufour, \emph{Stability and ergodicity of piecewise
  deterministic {M}arkov processes}, SIAM J. Control Optim. \textbf{47} (2008),
  no.~2, 1053--1077. \MR{MR2385873 (2009b:93163)}

\bibitem[CGM08]{CGM08}
P.~Cattiaux, A.~Guillin, and F.~Malrieu, \emph{Probabilistic approach for
  granular media equations in the non-uniformly convex case}, Probab. Theory
  Related Fields \textbf{140} (2008), no.~1-2, 19--40. \MR{2357669
  (2009a:60017)}

\bibitem[Clo12]{cloez}
B.~Cloez, \emph{Wasserstein decay of one dimensional jump-diffusions}, preprint
  arXiv:1202.1259v1 available on \href{http://arxiv.org/abs/1202.1259}{arXiv},
  2012.

\bibitem[CMP10]{CMP10}
D.~Chafa{\"{\i}}, F.~Malrieu, and K.~Paroux, \emph{On the long time behavior of
  the {TCP} window size process}, Stochastic Process. Appl. \textbf{120}
  (2010), no.~8, 1518--1534. \MR{2653264}

\bibitem[DGR02]{DGR}
V.~Dumas, F.~Guillemin, and Ph. Robert, \emph{A {M}arkovian analysis of
  additive-increase multiplicative-decrease algorithms}, Adv. in Appl. Probab.
  \textbf{34} (2002), no.~1, 85--111. \MR{MR1895332 (2003f:60168)}

\bibitem[Ebe11]{eberle}
A.~Eberle, \emph{Reflection coupling and {W}asserstein contractivity without
  convexity}, C. R. Math. Acad. Sci. Paris \textbf{349} (2011), no.~19-20,
  1101--1104.

\bibitem[GK09]{GK}
I.~Grigorescu and M.~Kang, \emph{Recurrence and ergodicity for a continuous
  {AIMD} model}, Preprint available on the web page of the
  \href{http://www.math.miami.edu/~igrigore/pp/research.html}{first author},
  2009.

\bibitem[GRZ04]{GRZ}
F.~Guillemin, Ph. Robert, and B.~Zwart, \emph{A{IMD} algorithms and exponential
  functionals}, Ann. Appl. Probab. \textbf{14} (2004), no.~1, 90--117.
  \MR{MR2023017 (2004m:60216)}

\bibitem[GW12]{GW}
A.~Guillin and F.-Y. Wang, \emph{Degenerate {F}okker-–{P}lanck equations:
  {B}ismut formula, gradient estimate and {H}arnack inequality}, Journal of
  Differential Equations \textbf{253} (2012), no.~1, 20 -- 40.

\bibitem[HM11]{HM}
M.~Hairer and J.~C. Mattingly, \emph{Yet another look at {H}arris' ergodic
  theorem for {M}arkov chains}, Seminar on {S}tochastic {A}nalysis, {R}andom
  {F}ields and {A}pplications {VI}, Progr. Probab., vol.~63,
  Birkh\"auser/Springer Basel AG, Basel, 2011, pp.~109--117. \MR{2857021}

\bibitem[HSV11]{HSV}
M.~Hairer, A.~M. Stuart, and S.~Vollmer, \emph{Spectral gaps for a
  {M}etropolis--{H}astings algorithm in infinite dimensions}, preprint
  arXiv:1112.1392 available on
  \href{http://front.math.ucdavis.edu/1112.1392}{arXiv}, 2011.

\bibitem[Lin92]{lindvall}
T.~Lindvall, \emph{Lectures on the coupling method}, Wiley Series in
  Probability and Mathematical Statistics: Probability and Mathematical
  Statistics, John Wiley \& Sons Inc., New York, 1992, A Wiley-Interscience
  Publication. \MR{1180522 (94c:60002)}

\bibitem[LL08]{LL}
J.~S. H.~van Leeuwaarden and A.~H. L{\"o}pker, \emph{Transient moments of the
  {TCP} window size process}, J. Appl. Probab. \textbf{45} (2008), no.~1,
  163--175.

\bibitem[LP08]{MR2386736}
D.~Lamberton and G.~Pag{\`e}s, \emph{A penalized bandit algorithm}, Electron.
  J. Probab. \textbf{13} (2008), no. 13, 341--373. \MR{2386736 (2009a:62328)}

\bibitem[LP09]{Lau-Per}
P.~Lauren{\c{c}}ot and B.~Perthame, \emph{Exponential decay for the
  growth-fragmentation/cell-division equation}, Commun. Math. Sci. \textbf{7}
  (2009), no.~2, 503--510. \MR{2536450 (2010e:45010)}

\bibitem[LP11]{LP}
A.~H. L{\"o}pker and Z.~Palmowski, \emph{On {T}ime {R}eversal of {P}iecewise
  {D}eterministic {M}arkov {P}rocesses}, preprint arXiv:1110.3813v1 available
  on \href{http://arxiv.org/abs/1110.3813}{arXiv}, 2011.

\bibitem[LR86]{LR}
T.~Lindvall and L.~C.~G. Rogers, \emph{Coupling of multidimensional diffusions
  by reflection}, Ann. Probab. \textbf{14} (1986), no.~3, 860--872. \MR{841588
  (88b:60179)}

\bibitem[MT93]{MR1287609}
S.~P. Meyn and R.~L. Tweedie, \emph{Markov chains and stochastic stability},
  Communications and Control Engineering Series, Springer-Verlag London Ltd.,
  London, 1993. \MR{MR1287609 (95j:60103)}

\bibitem[MZ06]{MR2197972}
K.~Maulik and B.~Zwart, \emph{Tail asymptotics for exponential functionals of
  {L}\'evy processes}, Stochastic Process. Appl. \textbf{116} (2006), no.~2,
  156--177. \MR{MR2197972 (2007g:60049)}

\bibitem[MZ09]{maulik-zwart}
\bysame, \emph{An extension of the square root law of {TCP}}, Ann. Oper. Res.
  \textbf{170} (2009), 217--232. \MR{2506283 (2010d:94010)}

\bibitem[OK08]{MR2426601}
T.~J. Ott and J.~H.~B. Kemperman, \emph{Transient behavior of processes in the
  {TCP} paradigm}, Probab. Engrg. Inform. Sci. \textbf{22} (2008), no.~3,
  431--471. \MR{MR2426601}

\bibitem[OKM96]{ott-kemperman-mathis}
T.~J. Ott, J.~H.~B. Kemperman, and M.~Mathis, \emph{The stationary behavior of
  ideal {T}{C}{P} congestion avoidance}, unpublished manuscript available at
  \url{http://www.teunisott.com/}, 1996.

\bibitem[PR05]{PR}
B.~Perthame and L.~Ryzhik, \emph{Exponential decay for the fragmentation or
  cell-division equation}, J. Differential Equations \textbf{210} (2005),
  no.~1, 155--177. \MR{2114128 (2006b:35328)}

\bibitem[Rac91]{MR1105086}
S.~T. Rachev, \emph{Probability metrics and the stability of stochastic
  models}, Wiley Series in Probability and Mathematical Statistics: Applied
  Probability and Statistics, John Wiley \& Sons Ltd., Chichester, 1991.
  \MR{MR1105086 (93b:60012)}

\bibitem[RR96]{RR96}
G.~O. Roberts and J.~S. Rosenthal, \emph{Quantitative bounds for convergence
  rates of continuous time {M}arkov processes}, Electron. J. Probab. \textbf{1}
  (1996), no.\ 9, approx.\ 21 pp.\ (electronic). \MR{1423462 (97k:60198)}

\bibitem[RT00]{RT}
G.~O. Roberts and R.~L. Tweedie, \emph{Rates of convergence of stochastically
  monotone and continuous time {M}arkov models}, J. Appl. Probab. \textbf{37}
  (2000), no.~2, 359--373. \MR{1780996 (2001i:60111)}

\bibitem[Vil03]{MR1964483}
C.~Villani, \emph{Topics in optimal transportation}, Graduate Studies in
  Mathematics, vol.~58, American Mathematical Society, Providence, RI, 2003.
  \MR{MR1964483 (2004e:90003)}

\bibitem[vRS05]{vRS}
M.-K. von Renesse and K.-T. Sturm, \emph{Transport inequalities, gradient
  estimates, entropy, and {R}icci curvature}, Comm. Pure Appl. Math.
  \textbf{58} (2005), no.~7, 923--940. \MR{2142879 (2006j:53048)}

\bibitem[Wan10]{wang10}
F.-Y. Wang, \emph{Harnack inequalities on manifolds with boundary and
  applications}, J. Math. Pures Appl. (9) \textbf{94} (2010), no.~3, 304--321.
  \MR{2679029 (2011f:58040)}

\end{thebibliography}

\bigskip


{\footnotesize %
\noindent Jean-Baptiste~\textsc{Bardet},
e-mail: \texttt{jean-baptiste.bardet(AT)univ-rouen.fr}

\medskip

\noindent \textsc{UMR 6085 CNRS Laboratoire de Math\'ematiques 
Rapha\"el Salem (LMRS)\\ Universit\'e de Rouen,
Avenue de l'Universit\'e, BP 12,
F-76801 Saint Etienne du Rouvray, France}
  
\bigskip

\noindent Alejandra~\textsc{Christen}, 
e-mail: \texttt{achristen(AT)dim.uchile.cl}

\medskip 

\noindent\textsc{UMI 2807 UCHILE-CNRS DIM-CMM\\ Universidad de Chile, Casilla 170-3, Correo 3, Santiago, Chile\\
and\\
Instituto de Estadistica, Pontificia Universidad Cat\'olica de Valpara\'\i so}

\bigskip

\noindent Arnaud~\textsc{Guillin}, 
e-mail: \texttt{guillin(AT)math.univ-bpclermont.fr}

\medskip 

\noindent\textsc{UMR 6620 CNRS Laboratoire de Math\'ematiques,\\ Universit\'e Blaise Pascal, Campus des Cezeaux, 63177 Aubi\`ere cedex, France.\\
Institut Universitaire de France.}

\bigskip

 \noindent Florent~\textsc{Malrieu},
 e-mail: \texttt{florent.malrieu(AT)univ-rennes1.fr}

 \medskip

  \noindent\textsc{UMR 6625 CNRS Institut de Recherche Math\'ematique de
    Rennes (IRMAR) \\ Universit\'e de Rennes 1, Campus de Beaulieu, F-35042
    Rennes \textsc{Cedex}, France.}
\bigskip

\noindent Pierre-Andr\'e~\textsc{Zitt}, 
e-mail: \texttt{Pierre-Andre.Zitt(AT)u-bourgogne.fr}

\medskip 

\noindent\textsc{UMR 5584 CNRS Institut de Math\'ematiques de Bourgogne,\\
Universit\'e de Bourgogne, UFR Sciences et Techniques,\\
9 avenue Alain Savary -- BP 47870, 
21078 Dijon Cedex, France
}

}

\end{document}